\documentclass[12pt,reqno]{amsart}
\usepackage{amsthm}
\usepackage{multirow}
\usepackage{amsmath}
\usepackage[margin=1in]{geometry}
\usepackage{amssymb}
\usepackage{mathtools}
\usepackage{verbatim}
\usepackage{longtable,enumitem,tikz-cd,color}
\usepackage{stmaryrd}
\usepackage{graphicx}
\usepackage{latexsym}   
\usepackage{amsmath}    
\usepackage{amsbsy}
\usepackage{array}
\usepackage[all]{xy}
\usepackage{hyperref}
\usepackage{caption}
\usepackage{booktabs}
\usepackage{makecell}
\usepackage{adjustbox}
\usepackage{csquotes}

\theoremstyle{plain}
\newtheorem{algorithm}{Algorithm}[section]

\newtheorem{definition}[algorithm]{Definition}
\newtheorem{observe}[algorithm]{Observation}
\newtheorem{example}[algorithm]{Example}

\newtheorem{lemma}[algorithm]{Lemma}

\newtheorem{main}{Theorem}

\newtheorem*{theorem*}{Theorem}
\newtheorem{theorem} [algorithm] {Theorem}

\newtheorem{remark}[algorithm]{Remark}

\newtheorem{Codim2}[algorithm]{Codimension Two Lemma}
\numberwithin{equation}{algorithm}

\newtheorem*{1/2MSR}{Half-Maximal Symmetry Rank Theorem}
\newtheorem*{corb}{Corollary B}
\newtheorem*{main2}{Theorem C}
\newtheorem*{main1}{Theorem A}

\newtheorem*{Borel}{Borel Formula}

\newcommand{\Z}{\mathbb{Z}}
\newcommand{\R}{\mathbb{R}}\newcommand{\C}{\mathbb{C}}

\newcommand{\F}{\mathbb{F}}

\newcommand{\RP}{\mathbb{R}\mathrm{P}}
\newcommand{\CP}{\mathbb{C}\mathrm{P}}

\DeclareMathOperator{\codim}{codim}

\newcommand{\ceil}[1]{\left\lceil #1 \right\rceil}
\newcommand{\floor}[1]{\left\lfloor #1 \right\rfloor}

\usepackage[T1]{fontenc}
\usepackage{makecell}

\title{On Embedding Hamming Spheres with Applications to Positive Curvature} 
\author{Muhammad Abdullah$^{*}$}
\date{\today}
\subjclass{53C21; 20K01}

\begin{document}
\begin{abstract} We consider isometric $\Z_p$-torus actions on closed, positively curved manifolds, obtaining a homotopy classification result. This allows us to strengthen on the improvement of the $3n/8$ half-maximal symmetry rank result of Fang and Rong \cite{FR} and Ghazawneh \cite{Gh} given in Theorem B and Corollary C in the work of the author and Searle \cite{AS}. Along the way, we generalize a sphere-embedding construction of Wilking \cite{Wilk} from the binary case to arbitrary primes in Theorem \hyperref[main2]{C}, which is a geometric construction of independent interest.
\end{abstract}
\maketitle

\begingroup
\renewcommand{\thefootnote}{*}
\footnotetext{%
\begin{minipage}[t]{\dimexpr\linewidth-1.5em\relax}
Department of Mathematics, Oregon State University,
Corvallis, OR 97331, USA.\\
\textit{Email address:} \texttt{abdumuha@oregonstate.edu}
\end{minipage}}
\endgroup

\vspace{-0.6cm}
\begin{center}
\footnotesize
\textsc{Keywords.} Positive curvature, $\mathbb{Z}_p$-torus actions,
error-correcting codes
\end{center}

\section{Introduction}

One approach to the classification problem for positively curved manifolds is to study those admitting \enquote{large} symmetry groups, with the aim of obtaining stronger classification results. This approach, suggested by Karsten Grove, is known as the Symmetry Program; see the survey by Grove \cite{G}. Beginning with Wilking \cite{Wilk}, techniques from the theory of error-correcting codes have played an important role in obtaining such classification results. Applications of these techniques can be found in the work of Wilking \cite{Wilk}, Kennard, Khalili Samani, and Searle \cite{KKSS}, Ghazawneh \cite{Gh,Gh1}, and the author and Searle \cite{AS}. Using finite-length Elias--Bassalygo and Plotkin bounds, the approximate $3n/8$ threshold obtained in \cite{FR,Gh} was improved in \cite{AS}. In this paper, by generalizing a sphere-embedding construction of \cite{Wilk}, we obtain a stronger generalized finite-length Elias--Bassalygo bound than that of \cite{AS}. As a consequence, we improve the symmetry rank estimates of \cite{AS} by enlarging the range of dimensions in which the corresponding classification results hold. Our first main theorem is the following.
\begin{main} \label{main1}
Assume $\Z_p^r$ acts effectively and isometrically on a closed, positively
curved manifold $M^n$ with a fixed point $x\in M$, where $p \in \{3,5,7,11,13,17,19\}$ and $n \ge 13$. 
Suppose
\begin{multline*}
r>
\frac12\Bigl(1-H_p(J_p(\tfrac14))\Bigr)n
+\frac32\log_p n
+\log_p\!\left(\frac{p-1}{2}+\frac1n\right)
-\frac12\log_p 2 \\
\qquad
+\frac12\log_p\!\left(\frac{8(1-J_p(\tfrac14))}{J_p(\tfrac14)}\right)
+\log_p(p-1)
=: G(n,p).
\end{multline*}
where $H_p$ and $J_p$ are the $p$-ary entropy and Johnson functions defined in Section \ref{2}. Then $M$ is homotopy equivalent to $S^n$, $\RP^n$, $\CP^{n/2}$, or a lens space.
\end{main}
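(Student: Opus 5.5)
Our plan is to argue by induction on $n$, following the pattern of the half-maximal symmetry rank arguments of \cite{FR,Gh} and their refinement in \cite{AS}. At the fixed point $x$, the isotropy representation of $\Z_p^r$ on $T_xM\cong\R^n$ splits, because $p$ is odd, into $m\le\lfloor n/2\rfloor$ complex lines. On each line the group acts by a character $\Z_p^r\to\Z_p$. Effectiveness means these characters span the dual group. Dually, we get a linear code $C\subseteq\F_p^{m}$ of dimension $r$: each $g\in\Z_p^r$ corresponds to the codeword recording the characters evaluated at $g$. For a subgroup $H$, the fixed-point component $M^H_x$ has dimension $n-2w$, where $w$ is the size of the common support of the codewords of $H$. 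For cyclic $H=\langle g\rangle$ this is $n-2\,\mathrm{wt}(g)$.

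The geometric input consists of two tools available from \cite{Wilk}. The first is Wilking's Connectedness Lemma: a totally geodesic $N^{n-k}\subset M^n$ gives an $(n-2k+1)$-connected inclusion, and a transverse pair gives the periodicity lemma. The second is the \emph{Codimension Two Lemma} and the associated periodicity statements. The key step is to produce a fixed-point component of small codimension, relative to $n$, for some cyclic subgroup, i.e.\ a nonzero codeword of weight at most roughly $n/8$. We would then apply induction and connectedness.

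To find such a codeword we use the coding bound from Section~\ref{2}. This is the generalized finite-length Elias--Bassalygo bound obtained from Theorem C, the $p$-ary Hamming sphere embedding. Concretely, if every nonzero codeword had weight $>d$ with $d\approx m\cdot J_p$-threshold, the bound would give
\[
r=\dim C\le m\bigl(1-H_p(J_p(\tfrac14))\bigr)+\text{(explicit lower-order terms)}.
\]
With $m\le n/2$, the lower-order terms are $\tfrac32\log_p n+\log_p\bigl(\tfrac{p-1}{2}+\tfrac1n\bigr)+\dots$, which together give exactly $G(n,p)$. The hypothesis $r>G(n,p)$ therefore forces a nonzero $g$ with $\mathrm{wt}(g)\le m/4$, roughly. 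Then $N=M^{\langle g\rangle}_x$ has codimension $k=2\,\mathrm{wt}(g)\le n/4$, roughly.

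Next, the residual group $\Z_p^r/\langle g\rangle$ acts on $N$ with the fixed point $x$. Its kernel has rank at most a controlled amount, since it acts freely on the normal sphere. We check that the induced rank $r'\ge r-1-(\text{kernel rank})$ still exceeds $G(n-k,p)$. This uses monotonicity of $G$ together with $\partial G/\partial n\approx\tfrac12(1-H_p)<1/2$. By induction, $N$ is homotopy equivalent to $S$, $\RP$, $\CP$, or a lens space. Since $k\le n/4$, the inclusion $N\hookrightarrow M$ is $(n-2k+1)$-connected, with $n-2k\ge n/2$. Combining this with a second transverse fixed-point set and Wilking's periodicity lemma, or with the Codimension Two Lemma when $k=2$, the cohomology of $M$ is periodic and agrees with that of $S^n$, $\CP^{n/2}$, or a lens space. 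The fundamental group is cyclic by Synge/Frankel-type arguments. This yields the homotopy classification as in \cite{AS}.

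We handle the base cases, small $n$ with $n\ge13$, by noting that $G(n,p)$ already exceeds the maximal-rank bound $\lfloor n/2\rfloor$ or reduces to known maximal/almost maximal symmetry rank results. We also need to make sure that the restriction to the listed primes is where the numerical inequalities close.

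We expect the main obstacle to be the bookkeeping in the induction step. We must show $G(n,p)-G(n-k,p)\le 1+(\text{rank lost})$ uniformly for all admissible $k$. The logarithmic terms are not linear, so we will need the explicit check for the listed primes. The restriction $n\ge13$ presumably exists to make this inequality hold.
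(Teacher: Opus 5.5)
Your architecture matches the paper's: the isotropy code at $x$; Theorem~\ref{newEB} with $\kappa_p\le(p-1)m+1$ and $m\le n/2$, which does give exactly $G(n,p)$; a fixed-point component of codimension at most $\frac{n+3}{4}$; and an induction anchored by larger-rank results (the paper uses Theorem~\ref{Gh+Zp}).

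The gap is in the inductive step. The inequality you need runs the other way. From $r>G(n,p)$ and $r'=r-1$ you must show $G(n,p)-G(n-k,p)\ge 1$, so the small slope of $G$ works against you. That slope is $\tfrac12\bigl(1-H_p(J_p(\tfrac14))\bigr)$, about $0.27$ for $p=3$ and $0.37$ for $p=19$. The inequality therefore holds for $k\ge 4$. For $k=2$, however, $G(n,p)-G(n-2,p)=1-H_p(J_p(\tfrac14))+\tfrac32\log_p\tfrac{n}{n-2}+(\text{a negative term})<1$ for all $n\ge 13$ and all listed $p$. So no uniform induction over admissible $k$ is possible, and nothing in the $\delta=\tfrac14$ bound prevents the component it produces from having codimension $2$.

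For that case you invoke the Codimension Two Lemma. For even $n$, though, it does not apply to a lone codimension-two fixed-point component. Wilking's lemma gives only $(n-3)$-connectedness of $N^{n-2}\subseteq M^n$, one short of hypothesis (2). Hypothesis (3) needs a second $\Z_p$-subgroup with a fixed-point component of codimension at most $\frac{n-1}{2}$. You mention ``a second transverse fixed-point set'' but never produce one.

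The missing step is Lemma~\ref{lemmaA}(2). Restrict $\rho_x$ to a complement $\Z_p^{r-1}$ of $\langle\tau_1\rangle$ and apply Theorem~\ref{newEB} again at $\delta=\tfrac13$; the hypothesis $r>G(n,p)$ absorbs the lost rank. This gives $\tau_2\notin\langle\tau_1\rangle$ with $k_2\le\frac{n+1}{3}$. Ordering $k_1\le k_2$, the paper then splits into two cases:
\begin{enumerate}
\item If $k_2\ge 4$, it inducts on $N_2$, not on the smaller component, and transfers the conclusion to $M$ via Lemma~\ref{combo}, using $4k_1\le n+3$ and $k_1+2k_2\le n+1$.
\item If $k_1=k_2=2$, it uses the Borel formula, Observation~\ref{borelobserve} and the Codimension Two Lemma.
\end{enumerate}

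A smaller point: your claim that the kernel acts freely on the normal sphere needs $g$ chosen of minimal weight, equivalently $N$ of maximal dimension as in the paper. Then the kernel of the action on $N$ is exactly $\langle g\rangle$ and $r'=r-1$. For an arbitrary low-weight $g$, every $h$ supported inside $\mathrm{supp}(g)$ also acts trivially on $N$, and the rank loss is not controlled.
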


For $3 \le p \le 19$, we have that $G(n, p)\leq c(p)n$ for $n\geq n_0(p)$. The explicit values of  $c(p)$ and $n_0(p)$ are given in Table \ref{values}. Consequently, as an immediate corollary of Theorem \ref{main1}, we obtain the following result.

\begin{table}[h]
\centering
\renewcommand{\arraystretch}{2.0}
\begin{adjustbox}{max width=\textwidth}
\begin{tabular}{|c||c|c|c|c|c|c|c|}
\hline
$p$
& 3 & 5 & 7 & 11 & 13 & 17 & 19 \\
\hline\hline
$c(p)$
& $\frac{9}{32}$
& $\frac{8}{25}$
& $\frac{17}{50}$
& $\frac{71}{200}$
& $\frac{179}{500}$
& $\frac{73}{200}$
& $\frac{37}{100}$ \\
\hline
$n_0(p)$
& 1277
& 3565
& 1777
& 3098
& 19281
& 12124
& 1976 \\
\hline
\end{tabular}
\end{adjustbox}
\caption{Values of $c(p)$ and $n_0(p)$ for $3 \le p \le 19$.}
\label{values}
\end{table}

\begin{corb} \label{corb}
    Assume $\mathbb{Z}_p^r$ acts effectively and isometrically on a closed, positively curved manifold $M^n$ with a non-empty fixed-point set, where  $p\in\{3,5,7,11,13,17,19\}$. Suppose further that $n\ge n_0(p)$ and $r\ge c(p)\,n$, where $c(p)$ and $n_0(p)$ are as in Table~\ref{values}. Then $M$ is homotopy equivalent to $S^n$, $\RP^n$, $\CP^{n/2}$, or a lens space.
\end{corb}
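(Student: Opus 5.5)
Corollary B follows from Theorem \ref{main1} once we show that, for each listed prime $p$ and every $n \ge n_0(p)$, the hypothesis $r \ge c(p)\,n$ forces $r > G(n,p)$. The other hypotheses carry over directly. A nonempty fixed-point set gives the fixed point $x$ required in Theorem \ref{main1}. Since $n_0(p) \ge 1277 > 13$, the condition $n \ge 13$ holds automatically. So the whole proof reduces to the elementary inequality
\[
G(n,p) < c(p)\,n \qquad \text{for all } n \ge n_0(p),
\]
checked for each $p \in \{3,5,7,11,13,17,19\}$. We can even allow equality $G(n_0,p)=c(p)n_0$ at the threshold, provided the inequality is strict beyond it, or we can use the strict version directly. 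Theorem \ref{main1} then gives the homotopy classification.

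To verify the inequality, fix $p$ and write $a_p := \tfrac12\bigl(1 - H_p(J_p(\tfrac14))\bigr)$. This is an explicit constant, computed from the definitions of $H_p$ and $J_p$ in Section \ref{2}. Also collect the constant terms
\[
b_p := \log_p(p-1) - \tfrac12\log_p 2 + \tfrac12\log_p\!\Bigl(\tfrac{8(1-J_p(1/4))}{J_p(1/4)}\Bigr).
\]
Then
\[
G(n,p) = a_p n + \tfrac32\log_p n + \log_p\!\bigl(\tfrac{p-1}{2} + \tfrac1n\bigr) + b_p.
\]
The first step is to confirm numerically that $a_p < c(p)$ for every listed $p$; this is what makes the table values meaningful. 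Next, bound the $1/n$ term: since $\log_p\bigl(\tfrac{p-1}{2} + \tfrac1n\bigr) \le \log_p\bigl(\tfrac{p-1}{2} + \tfrac1{n_0}\bigr)$, the inequality reduces to showing that
\[
f(n) := \bigl(c(p) - a_p\bigr)n - \tfrac32\log_p n - \log_p\!\bigl(\tfrac{p-1}{2} + \tfrac1{n_0}\bigr) - b_p
\]
is positive for $n \ge n_0$.

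The function $f$ has derivative $f'(n) = (c(p) - a_p) - \tfrac{3}{2n\ln p}$, which is positive as soon as $n > \tfrac{3}{2(c(p) - a_p)\ln p}$. One checks that this bound lies below $n_0(p)$. Then $f$ is increasing on $[n_0, \infty)$, so it suffices to verify $f(n_0(p)) > 0$, or more precisely $G(n_0,p) \le c(p)n_0$, together with monotonicity.

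The main obstacle is the numerical step. It requires evaluating $J_p(\tfrac14)$ and $H_p(J_p(\tfrac14))$ accurately enough, since the margin $c(p) - a_p$ is small and $n_0(p)$ was presumably chosen near the crossing point. I would carry this out by a direct high-precision computation for each of the seven primes, recording $a_p$, $b_p$, and $f(n_0(p))$ in a short table to justify the entries of Table \ref{values}.
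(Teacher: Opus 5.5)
Your proposal is correct and follows the paper's route: the paper also obtains Corollary B directly from Theorem A plus the numerical fact that $G(n,p)$ lies below $c(p)n$ for all $n\ge n_0(p)$, and your monotonicity reduction just makes that unstated check explicit (it is valid, since $\frac{3}{2(c(p)-a_p)\ln p}$ is well below $n_0(p)$ for every listed prime). One point to tighten: Theorem A needs $r>G(n,p)$ strictly, so you cannot allow $G(n_0,p)=c(p)n_0$, which would break the argument if $c(p)n_0$ were an integer; instead require $f(n_0(p))>0$, which does hold, though with tiny margins (about $5\cdot 10^{-4}$ at $p=5$, $n_0=3565$).
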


We immediately note that while the $c(p)$ values are the same as in Table 1 in our original paper \cite{AS}, the $n_0(p)$ values in Table \ref{values} here are, on average, approximately $29.3\%$ less than the corresponding dimensional threshold values, $n(p)$, in Table 1 in \cite{AS}. Alternatively, one may fix the dimension thresholds appearing in Table 1 of \cite{AS} and improve the corresponding symmetry rank constants, $c(p)$. For example, when $p=3$, retaining the previous dimension threshold $n(3)  = 1910$ allows the constant $c(3) = 9/32$ to be replaced by $7/25$. Thus, we have further improved the symmetry rank estimates by either enlarging the range of dimensions for which the homotopy classification results hold or, with the previous dimension thresholds fixed, lowering the corresponding symmetry rank constants. Moreover, we also later show, in the discussion preceding the proof of Theorem \ref{newEB}, that the estimate in Theorem \ref{main1} is an improvement over the estimate in Theorem B in \cite{AS} by a factor of $\approx \log_pn$. We refer the reader to Sections \ref{2} and \ref{4} for more detail.

We write $|S|$ for the cardinality of a set $S$. With this convention, our second main theorem is as follows.

\begin{main2} \label{main2}
Let $p\ge2$ and let $S_w(0)\subseteq\mathbb Z_p^m$ be a set of vectors of Hamming weight $w$. There exists an embedding
$f:S_w(0)\hookrightarrow S^{(p-1)m-1}\subset\mathbb R^{(p-1)m}$
such that if $b\le m$ is an integer satisfying
$\frac{w}{m}<J_p\!\left(\frac{b}{m}\right)$
and $\mathcal C$ is any subset of $S_w(0)$ that satisfies $d(x,y)\ge b$ for all distinct $x,y\in\mathcal C$, then

\begin{enumerate}
\item For all distinct $x,y\in\mathcal C$, $\langle f(x),f(y)\rangle<0$, where $\langle \,\cdot\,,\,\cdot\,\rangle$ denotes the Euclidean inner product; and
\item $|\mathcal C|\le\min\left\{(p-1)m+1,\gamma(p,b,w,m)\right\}$,
where \begin{equation}\label{gamma}
\gamma(p,b,w,m)
:=
\floor{\frac{b}{\,b-mJ_p^{-1}(\frac{w}{m})\,}}.
\end{equation}.
\end{enumerate}
\end{main2}
We first note that this theorem is the only result in this paper in which $p$ is allowed to be an arbitrary positive integer, rather than a prime. Since our primary interest is in linear codes over finite fields, in all subsequent applications, $p$ will denote a prime number. We also remark that a version of Theorem \hyperref[main2]{C} was proved in \cite{Wilk} for the case $p=2$ as part of the proof of Proposition 3.1 in \cite{Wilk}, and was later stated in a slightly different form as Lemma 1.3 in \cite{KKSS}. Our formulation follows that of \cite{KKSS}. The construction, detailed in Section \ref{2}, embeds a Hamming sphere $S_w(0)\subset\Z_p^m$ into $S^{(p-1)m-1}\subset\R^{(p-1)m}$ in such a way that the distance assumptions of Theorem \hyperref[main2]{C} force the image vectors to be pairwise obtuse. A Euclidean sphere-packing argument then yields a bound on the cardinality of $\mathcal C\subseteq S_w(0)$. Decomposing a $p$-ary code by intersecting it with Hamming spheres and applying this estimate, we obtain an upper bound on the corresponding information rate of the code. For codes arising from isotropy representations at fixed points, this translates directly into an estimate on the symmetry rank $r$.

The embedding of Theorem \hyperref[main2]{C} is inspired by both the work of \cite{Wilk}, which it generalizes and by Lemma 1.1 in Rudra \cite{Ru3}, which is used there in the proof of the Johnson bound. Such discrete embeddings are useful in the theory of error-correcting codes because one can often \enquote{slice} a general code by Hamming spheres and use bounds on the cardinality of each \enquote{slice} to obtain a bound on the information rate of the code. However, the embedding of Lemma 1.1 in \cite{Ru3} does not take the constant-weight structure of the code into account. Consequently, the image vectors are obtuse only for sufficiently large values of $b$, so the Euclidean sphere-packing argument used in \cite{Wilk} and \cite{KKSS} cannot be applied for the smaller values of $b$ that we require to prove Theorem \ref{main1} and Corollary \hyperref[corb]{B}. In the embedding constructed in Theorem \hyperref[main2]{C}, we instead exploit the structure of a regular simplex to obtain an optimal configuration of pairwise obtuse vectors, allowing us to work with the smaller values of $b$ required by our geometric applications. To make the optimality of this construction more apparent, we provide an example in Lemma \ref{projectivegamma} demonstrating the sharpness of Part (2) of Theorem \hyperref[main2]{C}.

Having outlined the main construction of this paper, we additionally remark that, for the proof of Theorem \hyperref[main1]{A}, the assumption of a fixed point is necessary, since the techniques used in our proofs rely on the existence of a fixed-point set component and the induced $\Z_p$-torus action on such a component. In general, unlike the case of isometric torus actions in positive curvature, where Berger’s theorem \cite{Be} guarantees a fixed point in even dimensions and Sugahara’s theorem \cite{Su} guarantees a fixed point for a codimension-one subtorus in odd dimensions, we know of no analogous results for $\Z_p^r$-actions, with the exception of even dimensions and primes strictly greater than $b(n)$, the constant guaranteed by work of Gromov \cite{Gr}; see Lemma 2.1 in \cite{FR}. We refer the reader to \cite{KKSS} and \cite{AS} for a further discussion. Finally, to the best of the author's knowledge, the bounds in Theorem \ref{main1} and Corollary \hyperref[corb]{B} are the strongest currently available for $\Z_p^r$-actions when $3\le p\le19$.
\subsection{Organization}

The paper is organized as follows. In Section \ref{2}, we recall some notions from the theory of error-correcting codes, prove Theorem \hyperref[main2]{C}, and use it to derive a finite-length generalized $p$-ary Elias-Bassalygo bound that improves the bound in Theorem 2.4 of \cite{AS}. In Section \ref{3}, we recall the geometric and topological preliminaries needed to prove Theorem \hyperref[main1]{A}. Finally, in Section \ref{4}, we conclude by proving Theorem \ref{main1}.

\subsection{Acknowledgments}
The author is grateful to his advisor, Catherine Searle, for her continued guidance and support. The author is also grateful to Lee Kennard for helpful conversations. This work formed a part of the author's master's thesis at Wichita State University. 

\section{The Proof of Theorem C}\label{2}

Throughout this section, we assume $p$ is prime unless stated otherwise. Before proving Theorem C, we recall several standard notions from the theory of error-correcting codes and make explicit the connection between the theory of error-correcting codes and the geometry of positive curvature.

A $p$-ary linear code of length $m$ is a linear subspace $\mathcal C\subseteq\F_p^m$, where $\F_p\cong\Z_p$ is the finite field with $p$ elements. Since our applications arise from effective actions of elementary abelian $p$-groups, we restrict attention to codes over $\F_p$. However, the arguments below apply equally well to arbitrary subsets of $\F_p^m$, and thus do not require linearity. For simplicity, we shall use the term \emph{code} to mean a $p$-ary linear code.

\begin{definition}
Let $x\in\F_p^m$. The \emph{\bf Hamming weight} of $x$, denoted $|x|$, is the number of nonzero entries of $x$. The \emph{\bf Hamming distance} between $x,y\in\F_p^m$ is $|x-y|$, which we will sometimes denote as $d(x,y)$.
\end{definition}

The error-detecting and error-correcting capabilities of a code are governed by its minimum distance.

\begin{definition}
For a $p$-ary code $\mathcal C$ of dimension $k$ and length $m$, the \emph{\bf minimum distance} of $\mathcal C$ is
$$
d_{\mathcal C}=\min\{|x-y|:x,y\in\mathcal C,\ x\neq y\}.
$$
If $d_{\mathcal C}\ge d$, then $\mathcal C$ is called an $[m,k,d]_p$-code.
\end{definition}

A central problem in coding theory is to determine how efficiently information can be transmitted through a noisy channel. This leads to the study of
$$
A_p(m,d)=\max\{|\mathcal C|:\mathcal C\subseteq\F_p^m,\ d_{\mathcal C}\ge d,\}
$$
and the information rate
$$
R_p(m,d)=\frac1m\log_pA_p(m,d).
$$
While much of the error-correcting code literature focuses on asymptotic bounds for the information rate, our interest lies in explicit finite-length estimates. Among the classical asymptotic bounds, including the Singleton, Hamming, Plotkin, Elias--Bassalygo, and McEliece--Rodemich--Rumsey--Welch bounds, the Elias--Bassalygo bound is strongest for small prime alphabets $3\le p\le19$; see \cite{vanLint1999}. Motivated by this observation, we develop a stronger finite-length Elias--Bassalygo bound by generalizing the construction  in \cite{Wilk}. Previous finite-length versions appear in \cite{AS}, although for $p=2$, the bound in \cite{Wilk} remains stronger. Our generalization recovers the estimate in \cite{Wilk} for $p=2$ and improves the corresponding bound in \cite{AS} for all primes.

As indicated at the start of this section, we now make explicit the connection between the theory of error-correcting codes and the geometry of positive curvature. Let $M^n$ be a positively curved manifold admitting an effective, isometric $\Z_p^r$-action, let $\tau\in\Z_p^r$ be nontrivial, and let $N$ be a component of $M^\tau$, the fixed point set in $M$ of $\tau$. For $x\in N$, the isotropy representation $\rho_x(\tau)=d\tau_x$ acts trivially on $T_xN$ and non-trivially on the normal space $\nu_xN$. Since $\Z_p^r$ is abelian, the isotropy representation is simultaneously diagonalizable over $\C$. For $p>2$, the action on $\nu_xN$ decomposes into $2\times2$ rotation blocks, yielding an injective homomorphism
$$
\phi_x:\Z_p^r\to\Z_p^m,
$$
where $m=\lfloor n/2\rfloor$. The effectiveness of the action implies that $\phi_x$ is injective, and fixed-point codimensions are encoded by Hamming weights via
$$
\codim(N)=2|\phi_x(\tau)|.
$$
As we will later see, geometric questions concerning fixed-point set components of low codimension translate into extremal problems for $p$-ary codes, allowing coding-theoretic bounds to yield symmetry rank estimates.

\subsection{The Embedding Lemma} \label{2.1}

Before proving Theorem \hyperref[main2]{C}, we recall the definitions of some standard functions from the theory of error-correcting codes and a geometric lemma.

\begin{definition}[{${p}$-ary entropy function and the Johnson radius}] \label{entropy} Let $\delta \in [0,\frac{p-1}{p}]$.
The \emph{\bf $\mathbf{p}$-ary entropy function} $H_p(x)$ and the \emph{\bf Johnson radius} $J_p(\delta)$ are respectively defined by
\begin{align*}
H_p(x) &= x \log_p(p-1) - x \log_p x - (1 - x) \log_p(1 - x), \,\,\text{and}\\
J_p(\delta) &= \left(1 - \frac{1}{p} \right) \left(1 - \sqrt{1 - \frac{p}{p-1} \delta} \right).
\end{align*}
\end{definition}

\begin{remark} \label{remarkJohnson&Entropy}
The function $J_p$ is an increasing function that maps the interval $[0,\frac{p-1}{p}]$ onto itself, and the function $\epsilon \mapsto 1-H_p(\epsilon)$ decreases in the same interval.
\end{remark}

We now collect two useful sphere-packing results from linear algebra into a lemma. Part (1) follows from Theorem 1 of Rankin \cite{Rankin}, while Part (2) appears in \cite{Ru3}. The latter is also implicit in Rankin's theorem: under the correspondence $\varepsilon=-\cos(2\alpha)$, Rankin's angular bound becomes $1+1/\varepsilon$, with the transition at $\varepsilon=1/d$.

\begin{lemma}\label{pack}
Let $d \ge 1$.
\begin{enumerate}
\item \cite{Rankin} Any collection of $d+2$ vectors in $\mathbb{R}^d$ contains two whose angle is non-obtuse; equivalently, if $v_1,\dots,v_{d+2}\in\mathbb{R}^d$, then $v_i\cdot v_j\ge 0$ for some $i\ne j$.

\item \cite{Ru3} If $v_1,\dots,v_N\in S^{d-1}\subseteq\mathbb R^d$ satisfy $v_i\cdot v_j\le-\varepsilon$ for all $i\neq j$, where $\varepsilon>0$, then $N\le \min\{d+1, \floor{1+\frac1\varepsilon}\}$.
\end{enumerate}
\end{lemma}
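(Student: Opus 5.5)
We would prove the two parts of Lemma~\ref{pack} separately. We prove Part (1) by induction on $d$, using orthogonal projection. We then deduce Part (2) from Part (1) together with a one-line Gram computation.

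For Part (1), we argue by contradiction. Suppose $v_1,\dots,v_{d+2}\in\R^d$ satisfy $v_i\cdot v_j<0$ for all $i\neq j$; in particular every $v_i$ is nonzero.
\begin{enumerate}
\item \emph{Base case $d=1$.} Here we have three nonzero real numbers, so two of them share a sign, and their product is positive. This is a contradiction.
\item \emph{Inductive step.} Set $u=v_{d+2}$ and project the remaining vectors onto $u^\perp\cong\R^{d-1}$:
\[
w_i=v_i-\frac{v_i\cdot u}{|u|^2}\,u,\qquad 1\le i\le d+1 .
\]
A direct computation gives
\[
w_i\cdot w_j=v_i\cdot v_j-\frac{(v_i\cdot u)(v_j\cdot u)}{|u|^2}.
\]
Both $v_i\cdot u$ and $v_j\cdot u$ are negative, so the subtracted term is positive. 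Hence $w_i\cdot w_j<v_i\cdot v_j<0$ for all $i\ne j$.
\item \emph{Conclusion.} The vectors $w_1,\dots,w_{d+1}$ are $(d-1)+2$ vectors in a $(d-1)$-dimensional space with pairwise strictly negative inner products. This contradicts the inductive hypothesis.
\end{enumerate}
The one point to check is that the $w_i$ are nonzero, so that the hypothesis genuinely applies. This is automatic: a zero vector would give inner product $0$, not a negative one. In fact the inductive hypothesis is needed only in the form ``no $k+2$ vectors in $\R^k$ have pairwise negative inner products,'' which makes no nonzero assumption.

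For Part (2), the bound $N\le d+1$ follows at once from Part (1). Since $\varepsilon>0$, the hypothesis gives $v_i\cdot v_j\le-\varepsilon<0$ for all $i\neq j$, so $N\ge d+2$ is impossible. For the other bound, we expand the squared norm of the sum of the vectors, using $|v_i|=1$:
\[
0\le\Bigl|\sum_{i=1}^N v_i\Bigr|^2=N+\sum_{i\ne j}v_i\cdot v_j\le N-N(N-1)\varepsilon .
\]
This gives $(N-1)\varepsilon\le 1$, so $N\le 1+1/\varepsilon$. Since $N$ is an integer, $N\le\floor{1+\frac1\varepsilon}$, and combining the two bounds yields the stated minimum.

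We expect the main obstacle to be the strictness bookkeeping in the inductive step of Part (1). The projection must strictly decrease the inner products, so that strict negativity survives in the lower-dimensional space. This is guaranteed by the sign of $(v_i\cdot u)(v_j\cdot u)$.
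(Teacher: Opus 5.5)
Your proof is correct and follows the paper's route: Part (1) by induction on $d$, where you supply the projection-onto-$u^\perp$ step that the paper leaves as ``a standard induction,'' and Part (2) by the estimate $0\le\|\sum_i v_i\|^2\le N-\varepsilon N(N-1)$ combined with Part (1). The strictness point you flag is handled correctly, since $(v_i\cdot u)(v_j\cdot u)>0$ makes the projected inner products strictly more negative.
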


\begin{proof}
We sketch the proof for the convenience of the reader. Part (1) follows by a standard induction on $d$. For Part (2), the estimate $$0\le\left\|\sum_{i=1}^N v_i\right\|^2\le N-\varepsilon N(N-1)$$ gives $N\le1+\frac1\varepsilon$. Combining this with Part (1) yields $N\le\min\{d+1, \floor{1+\frac1\varepsilon}\}$, where we can replace $1+\frac1\varepsilon$ by $\floor{1+\frac1\varepsilon}$ since $N$ is a natural number.
\end{proof}

Before recalling Theorem \hyperref[main2]{C}, we remind the reader that, as noted in the Introduction, this is the only result in the paper for which $p$ is allowed to be an arbitrary positive integer. In all subsequent applications, $p$ again denotes a prime. We now restate Theorem \hyperref[main2]{C} for the reader's convenience and proceed with its proof after a clarifying remark.

\begin{main2}
Let $p\ge2$ and let $S_w(0)\subseteq\mathbb Z_p^m$ be a set of vectors of Hamming weight $w$. There exists an embedding
$f:S_w(0)\hookrightarrow S^{(p-1)m-1}\subset\mathbb R^{(p-1)m}$
such that if $b\le m$ is an integer satisfying
$\frac{w}{m}<J_p\!\left(\frac{b}{m}\right)$
and $\mathcal C$ is any subset of $S_w(0)$ that satisfies $d(x,y)\ge b$ for all distinct $x,y\in\mathcal C$, then

\begin{enumerate}
\item For all distinct $x,y\in\mathcal C$, $\langle f(x),f(y)\rangle<0$, where $\langle \,\cdot\,,\,\cdot\,\rangle$ denotes the Euclidean inner product; and
\item $|\mathcal C|\le\min\left\{(p-1)m+1,\gamma(p,b,w,m)\right\}$,
where $\gamma(p,b,w,m)$ is defined in Display \ref{gamma}.
\end{enumerate}
\end{main2}

\begin{proof}
Assume, without loss of generality, that $p \ge 3$, since the construction below involves division by $p-2$. We refer the reader to \cite{Wilk} and \cite{KKSS} for the case $p=2$, noting that the underlying construction is essentially the same as the one presented here, but the corresponding formulas simplify and require a separate normalization. 

Choose vectors $u_a\in\R^{p-2}$ for $a\in\Z_p\setminus\{0\}$ forming a regular simplex, so that $\langle u_a,u_a\rangle=1$ and $\langle u_a,u_b\rangle=-\frac1{p-2}$ whenever $a\neq b$. Identifying $\R^{p-2}$ with the hyperplane $H=\{x\in\R^{p-1}:\sum_{i=1}^{p-1}x_i=0\}$, one may take
$$
u_i=\sqrt{\frac{p-1}{p-2}}\left(e_i-\frac1{p-1}\mathbf 1\right).
$$
Let $e_0$ be a unit vector orthogonal to $H$, define $\alpha=\frac1w$, $\beta=-\frac1{m-w}$, and $\lambda=(\alpha-\beta)\sqrt{\frac{p-2}{p}}$, and set $v_0=\beta e_0$ and $v_a=\alpha e_0+\lambda u_a$ for $a\in\Z_p\setminus\{0\}$. See Figure \hyperref[visualC]{1} for the cases $p=3$ and $p=4$.

\begin{figure}[ht] \label{visualC}
\centering

\includegraphics[width=0.42\textwidth]{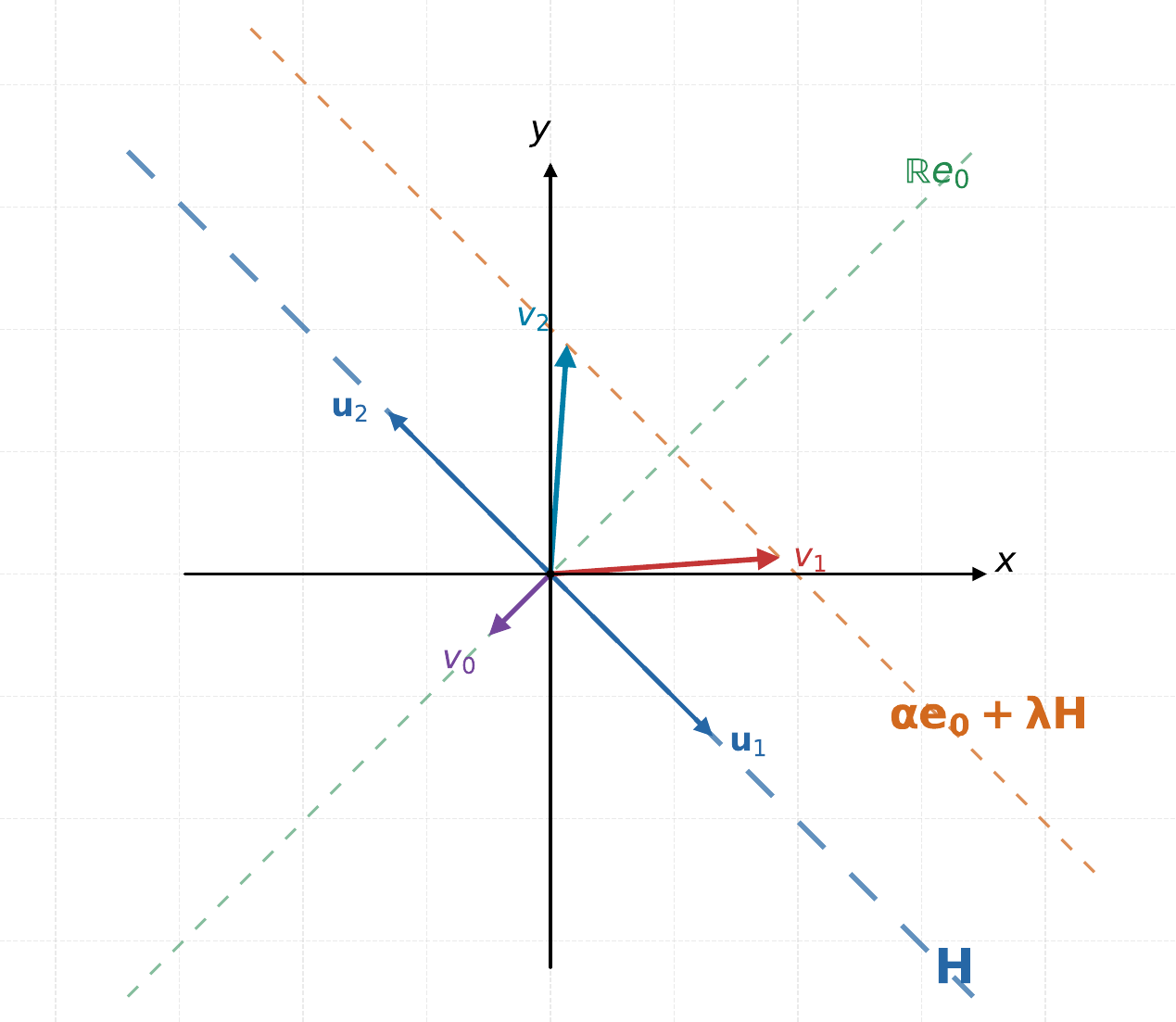}
\hfill
\includegraphics[width=0.48\textwidth]{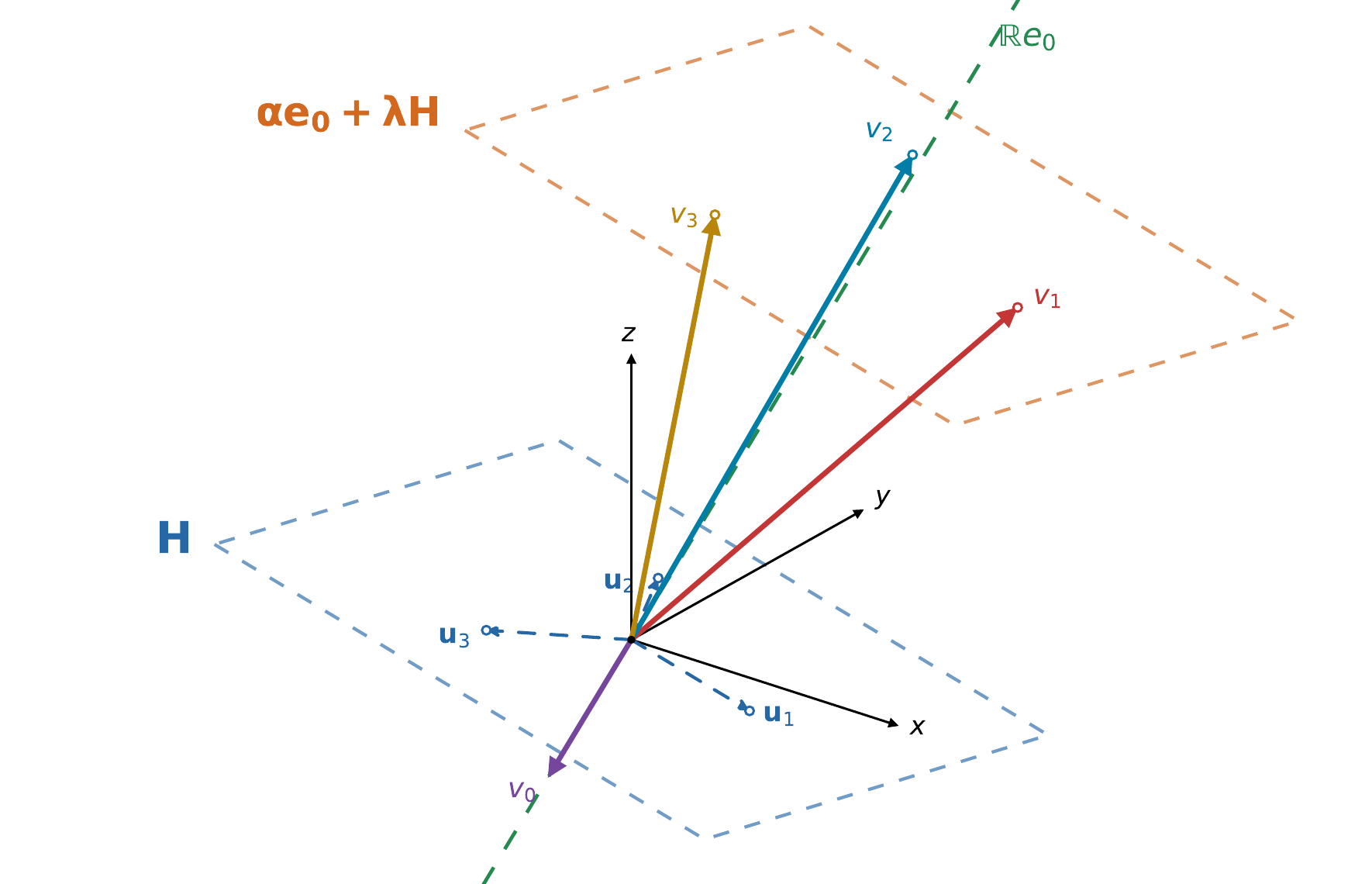}

\caption{
The vectors used in the proof of Theorem \hyperref[main2]{C} for $p=3$ and $p=4$.
}
\end{figure}

For $x=(x_1,\dots,x_m)\in S_w(0)$, define $F(x)=(v_{x_1},\dots,v_{x_m})\in\R^{(p-1)m}$. Write $A=\langle v_a,v_a\rangle$, $B=\langle v_a,v_b\rangle$ $(a\neq b)$, $E=\langle v_0,v_a\rangle$, and $D=\langle v_0,v_0\rangle$. A direct calculation gives $A=\alpha^2+\lambda^2$, $B=\alpha^2-\frac{\lambda^2}{p-2}$, $E=\alpha\beta$, $D=\beta^2$, and $B=E+\frac{A-D}{2}$.

Fix distinct $x,y\in S_w(0)$. Let $s$ be the number of coordinates with $x_i=y_i\neq0$, let $t$ be the number with $x_i\neq y_i$ and both entries nonzero, and let $k$ and $\ell$ denote the numbers of coordinates of type $(\ast,0)$ and $(0,\ast)$, respectively. Since both vectors have weight $w$, we have $k=\ell$ and $s+t+k=w$, while $d(x,y)=t+2k$. Expanding the inner product gives
\begin{equation}\label{eq:inner}
\langle F(x),F(y)\rangle =
wA+(m-w)D-\left(\frac A2-E+\frac D2\right)d(x,y).
\end{equation}
Since $|F(x)|^2=wA+(m-w)D$, the map
$$
f(x)=\frac{F(x)}{\sqrt{wA+(m-w)D}}
$$
takes values in $S^{(p-1)m-1}$. Using Equation \eqref{eq:inner},
\begin{equation}\label{eq:sum}
\langle f(x),f(y)\rangle
=
1-\frac{d(x,y)}
{\,2w-\frac{p}{p-1}\frac{w^2}{m}\,}.
\end{equation}
where we have used the identity
$$
\frac{wA+(m-w)D}{\frac A2-E+\frac D2} = 
2w-\frac{p}{p-1}\frac{w^2}{m}.
$$
Writing $\omega=w/m$ and $\delta=b/m$, the hypothesis $\omega<J_p(\delta)$, implies that $$\delta>J_p^{-1}(\omega)=2\omega-\frac{p}{p-1}\omega^2,$$ and hence
\begin{equation} \label{ineqb}
b>2w-\frac{p}{p-1}\frac{w^2}{m}.    
\end{equation}
Using the assumption that $d(x,y)\ge b$ for distinct $x, y \in \mathcal C$, Equation \eqref{eq:sum} shows that $\langle f(x),f(y)\rangle<0$ whenever $x \ne y$ and $x,y \in \mathcal{C}$, proving Part (1) of our Theorem.

Finally, $\{f(x):x\in \mathcal{C}\}$ is a collection of unit vectors in $\R^{(p-1)m}$ with pairwise negative inner products. Setting $$\varepsilon=-\left(1-\frac{b}{2w-\frac{p}{p-1}\frac{w^2}{m}}\right)=-\left(1-\frac{b}{mJ_p^{-1}(w/m)}\right),$$ we see from Equation \ref{eq:sum} that the hypotheses of Lemma \ref{pack} are satisfied. Then, by Inequality \ref{ineqb}, we see that
$$
|\mathcal{C}|
\le
\min\left\{(p-1)m+1,\gamma(p,b,w,m)\right\},
$$
proving Part (2) of the theorem.
\end{proof}

We now give a necessary and sufficient condition for when $\gamma(p,b,w,m)$ is stronger than $(p-1)m+1$, followed by an analysis of its utility.

\begin{lemma} \label{boundcriterion}
The bound $\gamma(p,b,w,m)$ is stronger than $(p-1)m+1$ if and only if
$$
w<mJ_p\!\left(\frac{(p-1)b}{(p-1)m+1}\right).
$$
Thus, the only values of $w$ for which the bound $(p-1)m+1$ can be sharper must satisfy
$$
mJ_p\!\left(\frac{(p-1)b}{(p-1)m+1}\right)\le w<mJ_p\!\left(\frac{b}{m}\right).
$$
\end{lemma}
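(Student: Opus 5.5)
Our approach is to reduce the comparison to a monotonicity statement about $J_p^{-1}$ and then use that $J_p$ is increasing. Throughout we work under the standing hypothesis of Theorem \hyperref[main2]{C}, namely $w/m<J_p(b/m)$. This hypothesis gives $mJ_p^{-1}(w/m)<b$, so the denominator $b-mJ_p^{-1}(w/m)$ in Display \eqref{gamma} is positive.

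We interpret ``$\gamma$ is stronger than $(p-1)m+1$'' as
$$\frac{b}{b-mJ_p^{-1}(w/m)}<(p-1)m+1,$$
comparing the unfloored quantity first. Since $(p-1)m+1$ is an integer, $\floor{X}<N$ is equivalent to $X<N$, so the floor can be removed. With the denominator positive, we cross-multiply and rearrange:
$$b<\bigl((p-1)m+1\bigr)\bigl(b-mJ_p^{-1}(w/m)\bigr).$$
This is equivalent to
$$\bigl((p-1)m+1\bigr)\,mJ_p^{-1}(w/m)<(p-1)m\,b,$$
that is,
$$J_p^{-1}\!\left(\frac wm\right)<\frac{(p-1)b}{(p-1)m+1}.$$

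By Remark \ref{remarkJohnson&Entropy}, $J_p$ is a strictly increasing bijection of $[0,\frac{p-1}{p}]$ onto itself. Applying $J_p$ to both sides therefore preserves the inequality, giving the claimed criterion $w<mJ_p\bigl(\frac{(p-1)b}{(p-1)m+1}\bigr)$.

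The second statement follows by combining this with the standing hypothesis. The inequality $\frac{(p-1)b}{(p-1)m+1}<\frac bm$ shows the new threshold lies below $mJ_p(b/m)$, so the stated interval makes sense.

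The step needing care is the domain check. The argument $\frac{(p-1)b}{(p-1)m+1}$ must lie in $[0,\frac{p-1}{p}]$ for $J_p$ to be defined there. If it exceeds $\frac{p-1}{p}$, I would interpret the criterion as automatically satisfied, since $w/m\le\frac{p-1}{p}$ whenever $J_p^{-1}(w/m)$ is defined. I would also handle the ``stronger'' versus ``at least as strong'' boundary case by noting the equivalence above is exact with strict inequalities.
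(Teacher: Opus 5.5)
Your proposal is correct and follows essentially the same argument as the paper: remove the floor using that $(p-1)m+1$ is an integer, cross-multiply using $b-mJ_p^{-1}(w/m)>0$ from the hypothesis of Theorem C, rearrange to $J_p^{-1}(w/m)<\frac{(p-1)b}{(p-1)m+1}$, apply strict monotonicity of $J_p$, and then get the second statement by combining with $w<mJ_p(b/m)$. Your domain worry is vacuous, since $\frac{(p-1)b}{(p-1)m+1}<\frac{b}{m}\le\frac{p-1}{p}$ whenever $J_p(b/m)$ is defined, so the argument of $J_p$ always lies in its domain.
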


\begin{proof}
By definition, $$\gamma(p,b,w,m)=\left\lfloor\frac{b}{b-mJ_p^{-1}(\frac{w}{m})}\right\rfloor.$$ Since $(p-1)m+1$ is an integer, $$\gamma(p,b,w,m)<(p-1)m+1$$ if and only if
$$\frac{b}{b-mJ_p^{-1}(\frac{w}{m})}<(p-1)m+1.$$ Since $b-mJ_p^{-1}(\frac{w}{m})>0$, this is equivalent to
$b<((p-1)m+1)(b-mJ_p^{-1}(\frac{w}{m}))$, or equivalently,
$$J_p^{-1}(\frac{w}{m})<\frac{(p-1)b}{(p-1)m+1}.$$
Since $J_p$ is strictly increasing on $[0,\frac{p-1}{p}]$, this is equivalent to
$$w<mJ_p\!\left(\frac{(p-1)b}{(p-1)m+1}\right),$$ proving the first statement. The second statement follows immediately from the contrapositive together with the hypothesis
$w<mJ_p\!\left(\frac bm\right)$.
\end{proof}

\begin{remark}
In general, it is difficult to give a more transparent criterion than the one in Lemma \ref{boundcriterion} for when $\gamma(p,b,w,m)$ is stronger than $(p-1)m+1$, as the comparison depends on the relationship between the parameters $b$ and $w$. However, as we see in Lemma \ref{projectivegamma} below, $\gamma(p,b,w,m)$ is indeed sometimes stronger than $(p-1)m+1$.
\end{remark}

As mentioned in the Introduction, we now demonstrate the sharpness of Part (2) of Theorem~\hyperref[main2]{C} for certain codes. It turns out that the appropriate family of codes for this purpose is the family of \emph{projective codes}. Recall that the points of the finite projective space $\mathrm{PG}(k-1,p)$ are the one-dimensional subspaces of $\Z_p^k$. Since $\mathrm{PG}(k-1,p)$ has $m=(p^k-1)/(p-1)$ points, we may index the coordinates of $\Z_p^m$ by these points and choose a nonzero representative $v_i$ for each. Setting $V=\{v_i\}_{i=1}^m$, we define the associated {\it projective code $\mathcal C_V$} by
$$
\mathcal C_V\coloneqq\left\{(u\cdot v_1,\ldots,u\cdot v_m):u\in\Z_p^k\setminus\{0\}\right\}.
$$
We emphasize that the subset $\mathcal C$ in Theorem \hyperref[main2]{C} is not assumed to be linear; in particular, $\mathcal C_V$ as defined here is not a linear code. The zero coordinates of $x_u\coloneqq(u\cdot v_1,\ldots,u\cdot v_m) \in \mathcal{C}_{V}$ are indexed precisely by the projective points contained in the hyperplane $u^\perp$ in $\Z_p^k$. Although the values of the nonzero coordinates depend on the chosen representatives $v_i$, the locations of the zero coordinates, and hence the weight of $x_u$, do not.

\begin{lemma}\label{projectivegamma}
Let $p$ be a prime and $k\ge2$, and set $m=(p^k-1)/(p-1)$. Let $\mathcal C_V$ be a projective code associated to $\mathrm{PG}(k-1,p)$. Then every $x\in\mathcal C_V$ has weight $w=p^{k-1}$, any two distinct $x,y\in\mathcal C_V$ satisfy $d(x,y)=b=p^{k-1}$, and
$$
|\mathcal C_V|=\gamma(p,b,w,m)=p^k-1.
$$
In particular,
$$
\gamma(p,b,w,m)<(p-1)m+1.
$$
\end{lemma}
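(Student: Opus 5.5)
The proof has three steps: compute the weights, compute the pairwise distances, and then evaluate $\gamma$ directly from Display \eqref{gamma}.

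\textbf{Weights.} Fix $u\in\Z_p^k\setminus\{0\}$. The zero coordinates of $x_u$ are the projective points lying in the hyperplane $u^\perp$. This hyperplane has dimension $k-1$, so it contains $(p^{k-1}-1)/(p-1)$ points. Hence
$$
w=m-\frac{p^{k-1}-1}{p-1}=\frac{p^k-p^{k-1}}{p-1}=p^{k-1}.
$$

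\textbf{Distances.} For distinct $u,u'$, the coordinates of $x_u-x_{u'}$ are $(u-u')\cdot v_i$, so $x_u-x_{u'}=x_{u-u'}$ computed with the same representatives. Since $u-u'\neq0$, the weight computation above gives $d(x_u,x_{u'})=p^{k-1}$. The assignment $u\mapsto x_u$ is injective, because $x_u=x_{u'}$ would force $x_{u-u'}=0$, which contradicts $|x_{u-u'}|=p^{k-1}>0$. Therefore $|\mathcal C_V|=p^k-1$.

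\textbf{Evaluating $\gamma$.} Put $b=w=p^{k-1}$ and $\omega=w/m$. The proof of Theorem \hyperref[main2]{C} shows $J_p^{-1}(\omega)=2\omega-\frac{p}{p-1}\omega^2$, so
$$
mJ_p^{-1}(w/m)=2w-\frac{p}{p-1}\frac{w^2}{m}.
$$
Using $m=(p^k-1)/(p-1)$, we get $\frac{p}{p-1}\frac{w^2}{m}=\frac{p^{2k-1}}{p^k-1}$. Hence
$$
b-mJ_p^{-1}(w/m)=\frac{p^{2k-1}}{p^k-1}-p^{k-1}=\frac{p^{k-1}}{p^k-1}.
$$
It follows that $\frac{b}{b-mJ_p^{-1}(w/m)}=p^k-1$. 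This is already an integer, so the floor does nothing and $\gamma(p,b,w,m)=p^k-1$.

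Before using $\gamma$ we must also check that the hypothesis $w/m<J_p(b/m)$ of Theorem \hyperref[main2]{C} holds. It is equivalent to $J_p^{-1}(w/m)<b/m$, and the positive difference just computed shows exactly this.

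\textbf{Comparison with $(p-1)m+1$.} We have $(p-1)m+1=p^k$, and $p^k-1<p^k$, which gives the final strict inequality.

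The only real obstacle is the algebraic simplification in the third step: one has to keep the $\frac{p}{p-1}$ factor correct and use $m(p-1)=p^k-1$ at the right moment. The coding-theoretic facts in the first two steps are standard.
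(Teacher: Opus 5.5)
Your proposal is correct and follows the paper's proof essentially step for step. You count the projective points of the hyperplane $u^\perp$ to get $w=p^{k-1}$, use $x_u-x_{u'}=x_{u-u'}$ for both the distances and injectivity, and evaluate $\gamma$ via the polynomial form $mJ_p^{-1}(w/m)=2w-\frac{p}{p-1}\frac{w^2}{m}=\frac{p^{k-1}(p^k-2)}{p^k-1}$.

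One caveat concerns your optional aside, which the statement does not require. Here $w/m=b/m=\frac{(p-1)p^{k-1}}{p^k-1}>\frac{p-1}{p}$, so $J_p(b/m)$ is not defined, and the claimed equivalence with $w/m<J_p(b/m)$ is not literally valid. What your computation actually establishes is the polynomial inequality $b>2w-\frac{p}{p-1}\frac{w^2}{m}$, which is all the proof of Theorem C uses.
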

\begin{proof}
The map $u\mapsto x_u$ is injective. Indeed, if $x_u=0$, then $u\cdot v_i=0$ for every $i$. Since the vectors $v_i$ represent all one-dimensional subspaces of $\Z_p^k$, this implies that $u=0$. Hence $|\mathcal C_V|=p^k-1$.

The zero coordinates of $x_u$ are indexed by the one-dimensional subspaces contained in $u^\perp$. Since $u^\perp$ has dimension $k-1$, it contains $(p^{k-1}-1)/(p-1)$ one-dimensional subspaces. Thus $|x_u|=p^{k-1}$. Moreover, if $u\ne u'$, then $x_u-x_{u'}=x_{u-u'}$, so
$d(x_u,x_{u'})=|x_{u-u'}|=p^{k-1}.$ Thus $b=w=p^{k-1}$. Moreover, since $mJ_p^{-1}(\frac{w}{m})=\frac{p^{k-1}(p^k-2)}{p^k-1},$
we obtain
$$
\gamma(p,b,w,m)
=
\floor{\frac{b}{b-mJ_p^{-1}(\frac{w}{m})}}
=
p^k-1
=
|\mathcal C_V|.
$$
Finally, $(p-1)m+1=p^k$, so $\gamma(p,b,w,m)<(p-1)m+1$.
\end{proof}

We illustrate the sharpness established in Lemma~\ref{projectivegamma} in the simplest nontrivial case. This example also provides a concrete realization of the projective codes appearing in the lemma.

\begin{example}\label{projectivecode}
For $p=2$ and $k=3$, the projective space $\mathrm{PG}(2,2)$ is the Fano plane. Its seven points are the one-dimensional subspaces of $\mathbb{F}_2^3$, so they index the seven coordinates of the projective code $\mathcal C_V$, where $V$ is the set of representatives of the points of $\mathrm{PG}(2,2)$. Each projective line consists of three points, corresponding to the one-dimensional subspaces contained in a hyperplane of $\mathbb{F}_2^3$. Consequently, for any nonzero $u\in\mathbb{F}_2^3$, the zero coordinates of the codeword $x_u$ are precisely the three points lying on the projective line corresponding to the hyperplane $u^\perp$. Figure~\ref{fig:fano} illustrates one such codeword: the points on the thickened projective line correspond to the zero coordinates of $x_u$, while the remaining points correspond to its nonzero coordinates.

By Lemma~\ref{projectivegamma},
$
|\mathcal C_{V}|=\gamma(2,4,4,7)=7<8=(2-1)7+1,
$
so the bound $\gamma(2,4,4,7)$ is attained.
\end{example}

\begin{figure}[h] 
\centering
\includegraphics[width=0.45\textwidth]{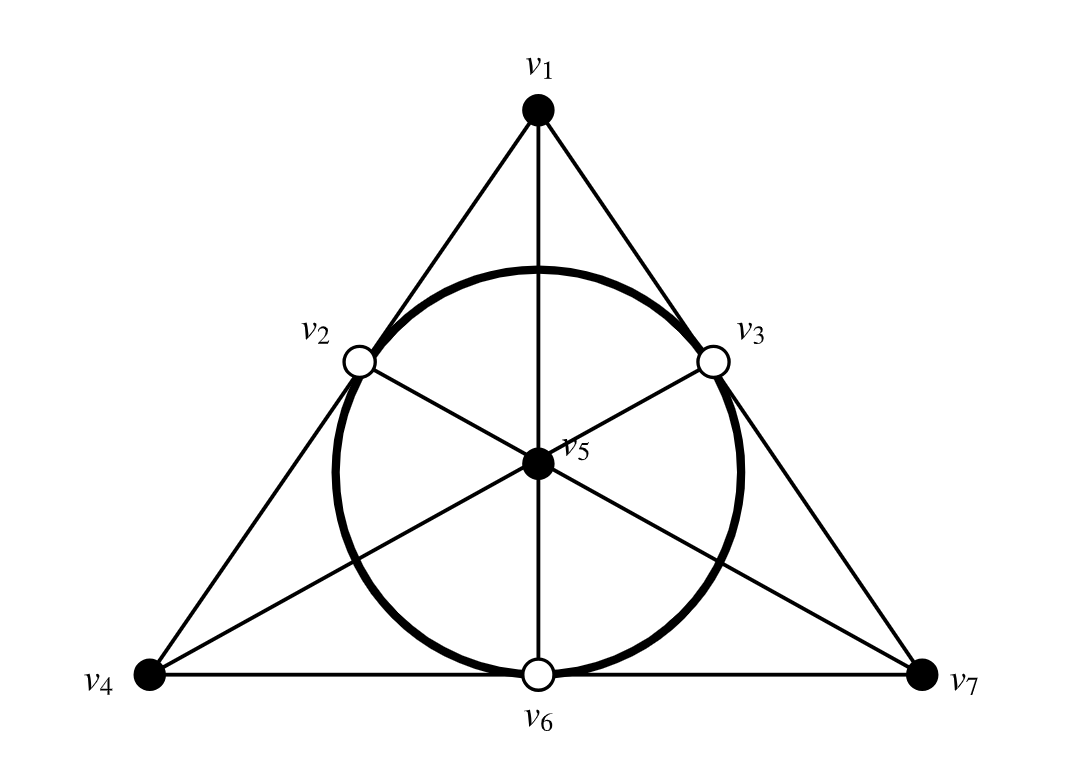}
\caption{The projective plane $\mathrm{PG}(2,2)$ over $\mathbb{F}_2$.}
\label{fig:fano}
\end{figure}

\subsection{A New Finite-Length Generalized Elias-Bassalygo Bound}

Having proved Theorem \hyperref[main2]{C}, we are now in a position to derive the following finite-length generalized Elias--Bassalygo bound, which we later use in the proof of Theorem \hyperref[main1]{A}.

\begin{theorem} \label{newEB}
Fix $\delta \in (0,\frac{p-1}{p})$. Suppose
$\rho : \mathbb{Z}_p^r \rightarrow \mathbb{Z}_p^m$
is injective and its image $C$ has minimum distance at least $\delta m$. Then
$$
r \le (1 - H_p(J_p(\delta)))m
+ \frac{1}{2}\log_p m
+ \frac{1}{2}\log_p\!\left(\frac{8(1-J_p(\delta))}{J_p(\delta)}\right)
+ \log_p(p-1)
+ \log_p\bigl(\kappa_p(\delta,m)\bigr),
$$
where
$\kappa_p(\delta,m) :=
\min\left\{(p-1)m+1,\gamma\!\left(p,\delta m,\lceil J_p(\delta)m\rceil-1,m\right)\right\}$.
\end{theorem}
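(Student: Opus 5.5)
The plan is the standard Elias--Bassalygo averaging argument, with Theorem \hyperref[main2]{C} supplying the bound on each Hamming-sphere slice. Set $w=\lceil J_p(\delta)m\rceil-1$. Then $w<J_p(\delta)m$, that is, $\frac{w}{m}<J_p(\delta)=J_p(\frac{b}{m})$ with $b=\delta m$. (If $\delta m$ is not an integer, I will take $b=\lceil\delta m\rceil$. Since $J_p$ is increasing by Remark \ref{remarkJohnson&Entropy}, the hypothesis only gets easier, and $\gamma$ is monotone in the right direction.) I will count pairs $(c,z)$ with $c\in C$, $z\in\mathbb Z_p^m$ and $d(c,z)=w$. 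This count equals $|C|\cdot|S_w(0)|=p^r\binom{m}{w}(p-1)^w$. By pigeonhole, some center $z$ has
$$|C\cap S_w(z)|\ge \frac{p^r\binom{m}{w}(p-1)^w}{p^m}.$$

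Translating by $z$ turns $(C-z)\cap S_w(0)$ into a subset of $S_w(0)$ with pairwise distances at least $b$, since translation preserves Hamming distance. Part (2) of Theorem \hyperref[main2]{C} then bounds this subset by $\kappa_p(\delta,m)$. Hence
$$p^r\le \frac{p^m\,\kappa_p(\delta,m)}{\binom{m}{w}(p-1)^w}.$$
Taking $\log_p$ leaves the task of lower-bounding $\log_p\bigl(\binom{m}{w}(p-1)^w\bigr)$ by roughly $H_p(w/m)\,m$ minus a logarithmic term.

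For that estimate I use the Stirling-type bound
$$\binom{m}{w}\ge \sqrt{\frac{m}{8w(m-w)}}\;p^{\,m(H_p(w/m)-\frac wm\log_p(p-1))}.$$
Writing $\omega=w/m$, this gives
$$\log_p\Bigl(\binom mw(p-1)^w\Bigr)\ge mH_p(\omega)-\tfrac12\log_p\bigl(8m\,\omega(1-\omega)\bigr).$$
The main obstacle is passing from $\omega=w/m$ to $J_p(\delta)$ when $w=\lceil J_p(\delta)m\rceil-1$, so that $\omega\in(J_p(\delta)-\frac1m,\,J_p(\delta))$. I will handle it in three steps.
\begin{enumerate}
\item Since $H_p$ is increasing on $[0,\frac{p-1}{p}]$ and concave, the mean value theorem gives $mH_p(J_p(\delta))-mH_p(\omega)\le H_p'(\omega)$. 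Here $H_p'(x)=\log_p\frac{(p-1)(1-x)}{x}$.
\item I absorb this loss together with the factor $\omega(1-\omega)$. The $\omega(1-\omega)$ inside the square root should combine with $H_p'$ to produce the stated term $\frac12\log_p\frac{8(1-J_p)}{J_p}$ plus an extra $\log_p(p-1)$, which accounts for the $\log_p(p-1)$ appearing in the statement.
\item Concretely, I expect to bound
$$\frac{1-\omega}{\omega}\le \frac{(1-J_p(\delta))}{J_p(\delta)}\cdot(\text{a factor absorbed by } m),$$
and to use $\omega(1-\omega)\ge\frac{w(m-w)}{m^2}$ with $w\ge1$.
\end{enumerate}
This bookkeeping may need an extra $\frac12\log_p m$, which the statement provides.

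Assembling these pieces gives
$$r\le m-mH_p(J_p(\delta))+\tfrac12\log_p m+\tfrac12\log_p\Bigl(\frac{8(1-J_p(\delta))}{J_p(\delta)}\Bigr)+\log_p(p-1)+\log_p\kappa_p(\delta,m),$$
which is the claimed inequality.
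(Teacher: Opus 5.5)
Your counting step matches the paper's: pigeonholing over centers is equivalent to its double count of $\mathcal A$. You apply Theorem C correctly, and your treatment of non-integer $\delta m$ via $b=\lceil\delta m\rceil$ is sound. The gap is in the final bookkeeping, where you pass from $\omega=w/m<J:=J_p(\delta)$ up to $J$. Your concavity step costs $H_p'(\omega)=\log_p(p-1)+\log_p\frac{1-\omega}{\omega}$. Adding the Stirling term $\tfrac12\log_p(8m\omega(1-\omega))$ gives
\[
\tfrac12\log_p m+\log_p(p-1)+\tfrac12\log_p\frac{8(1-\omega)^3}{\omega},
\]
and you would need this to be at most $\tfrac12\log_p m+\log_p(p-1)+\tfrac12\log_p\frac{8(1-J)}{J}$. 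Because $\omega<J$, the factor $\frac{1-\omega}{\omega}$ lies on the wrong side of $\frac{1-J}{J}$, and $(1-\omega)^2$ does not always compensate. For example, if $Jm=2$, so $w=1$, then $\frac{J(1-\omega)^3}{\omega(1-J)}\to 2$ as $m\to\infty$, and your bound overshoots the claimed one by about $\tfrac12\log_p 2$. There is no spare $\tfrac12\log_p m$ to absorb this: the one in the statement is already used by the Stirling term. Your route does close when $Jm$ is large, since the ratio then tends to $(1-J)^2<1$, but the theorem is stated for all $m$. You also leave the case $w=0$ (i.e.\ $Jm\le 1$) untreated; there neither the Stirling bound nor $H_p'(0)$ is available.

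The missing idea is to avoid the continuous comparison entirely by moving up one weight with the exact identity
\[
\binom{m}{w}(p-1)^{w}=\binom{m}{w+1}(p-1)^{w+1}\cdot\frac{w+1}{(p-1)(m-w)},
\]
and applying Stirling at $\epsilon=(w+1)/m=\lceil Jm\rceil/m\ge J$. The shift costs exactly $\log_p(p-1)+\log_p\frac{m-w}{w+1}\le\log_p(p-1)+\log_p\frac{1}{\epsilon}$; this is where the $\log_p(p-1)$ in the statement comes from. It merges with $\tfrac12\log_p(8\epsilon(1-\epsilon)m)$ into $\tfrac12\log_p m+\tfrac12\log_p\frac{8(1-\epsilon)}{\epsilon}$. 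Every $\epsilon$-dependent term of the resulting bound is then decreasing in $\epsilon$ on $(0,\frac{p-1}{p})$, so $\epsilon\ge J$ lets you substitute $J$ directly, with monotonicity now working for you rather than against you. You still need to check separately that $\lceil Jm\rceil\le\frac{p-1}{p}m$ and to dispose of the smallest weights.
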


Before proving the theorem, it is worth comparing this estimate with previous bounds. For $p=2$, the bound is only slightly weaker than Theorem~1.2 of \cite{KKSS}, differing by at most an additive constant of $1$. This difference arises from the estimate $m-w+1\le m$ used in the proof below, which is slightly coarser than the corresponding estimate in \cite{KKSS}. Since $\log_p((p-1)m+1)\sim\log_p m$ as $m\to\infty$, the present result improves the logarithmic term of the Elias--Bassalygo bound in Corollary~2.6 of \cite{AS} from $2.5\log_p m$ to $1.5\log_p m$, yielding a gain of order $\log_p m$. Moreover, the argument is more direct, avoiding the continuous relaxation and remainder-term estimates required in the proof of Corollary~2.6 in \cite{AS}, and consequently leads to a cleaner final expression. Keeping these remarks in mind, we prove Theorem \ref{newEB}.

\begin{proof}[Proof of Theorem \ref{newEB}]

This is a similar proof to that of Theorem 1.2 in \cite{KKSS}. Set $b=\delta m$ and define $w := \ceil{J_p(\delta)m}$ and $w_0 := w-1$. Then $w_0/m < J_p(b/m)$. Let $C=\rho(\Z_p^r)\subseteq \Z_p^m$ and consider the set
\[
\mathcal{A}
=
\{(c,x)\in C\times \Z_p^m : d(c,x)=w_0\}.
\]
For each fixed $c\in C$, the number of $x\in \Z_p^m$ with $d(c,x)=w_0$ is $\binom{m}{w_0}(p-1)^{w_0}$, since one chooses $w_0$ coordinates in which $x$ differs from $c$, and in each such coordinate there are $(p-1)$ choices. Summing over $c\in C$, we obtain
\[
|\mathcal{A}| = p^r \binom{m}{w_0}(p-1)^{w_0}.
\]
On the other hand, fix $x\in \Z_p^m$ and consider the set
\[
C_x := \{c\in C : d(c,x)=w_0\}.
\]
Define $\phi_x : C_x \to \Z_p^m$ by $\phi_x(c)=c-x$. This map is a bijection onto its image, and for each $c\in C_x$ we have $|\phi_x(c)| = d(c,x)=w_0$, so $\phi_x(C_x)$ consists of vectors of weight $w_0$. Moreover, for $c_1,c_2\in C_x$, $d(\phi_x(c_1),\phi_x(c_2)) \ge b$, since $C$ has minimum distance at least $b$. Thus $\phi_x(C_x)$ is a set of vectors of weight $w_0$ with pairwise distance at least $b$, and Theorem \hyperref[main2]{C} implies that $|C_x|\le \kappa_p(\delta,m)$. Summing over all $x\in \Z_p^m$, we obtain $|\mathcal{A}| \le p^m\kappa_p(\delta,m).$
Comparing the two expressions for $|\mathcal{A}|$ yields
\[
p^r \binom{m}{w_0}(p-1)^{w_0}
\le
p^m\kappa_p(\delta,m).
\]
Using the identity
\[
\binom{m}{w_0}
=
\binom{m}{w}\frac{w}{m-w+1},
\]
we rewrite this as
\[
p^r \binom{m}{w}(p-1)^w
\le
p^m (p-1)\frac{m-w+1}{w}\kappa_p(\delta,m).
\]

We first assume that $2 \le w \le \frac{p-1}{p}m$, and treat the remaining cases separately below. Set $\epsilon=w/m$. By Lemma 4.7.1 in Ash \cite{Ash}, which comes from Stirling's estimate,
\[
\binom{m}{w}(p-1)^w
\ge
\frac{p^{H_p(\epsilon)m}}{\sqrt{8\epsilon(1-\epsilon)m}}.
\]
Substituting the preceding estimate and using $m-w+1\le m$, we obtain
\[
p^r
\le
p^{(1-H_p(\epsilon))m}
\cdot
(p-1)\frac{m}{w}\kappa_p(\delta,m)
\sqrt{8\epsilon(1-\epsilon)m}.
\]
Taking $\log_p$ of the above inequality, using $w=\epsilon m$, and rearranging terms, we obtain
\[
r \le (1 - H_p(\epsilon))m
+ \frac{1}{2}\log_p m
+ \frac{1}{2}\log_p\!\left(\frac{8(1-\epsilon)}{\epsilon}\right)
+ \log_p(p-1)
+ \log_p\bigl(\kappa_p(\delta,m)\bigr).
\]
Since $H_p$ is increasing on $\left(0,\frac{p-1}{p}\right)$ and $(1-\epsilon)/\epsilon$ is decreasing on this interval, the right-hand side is decreasing in $\epsilon$. As $\epsilon \ge J_p(\delta)$, we substitute $\epsilon=J_p(\delta)$.

If $w<2$, then $w_0=0$, so for each $x$ the set $C_x$ contains at most one element, and hence $|C|=1$, implying $r=0$.

Finally, suppose $w > \frac{p-1}{p}m$. Then $w=\lceil J_p(\delta)m\rceil > \frac{p-1}{p}m$, so $J_p(\delta)m > \frac{p-1}{p}m - 1$. Using the formula for $J_p$ in Definition \ref{entropy}, this implies that
$$
\delta > \frac{p-1}{p} - \frac{p}{(p-1)m^2}.
$$
Multiplying by $m$ gives
$$
b > \frac{p-1}{p}m - \frac{p}{(p-1)m}
> \frac{p-1}{p}m - 1.
$$
Since $b$ is an integer, it follows that $b \ge \left\lceil \frac{p-1}{p}m \right\rceil$, and hence $\delta=b/m \ge \frac{p-1}{p}$, contradicting the hypothesis.
\end{proof}

\section{Topological and Geometric Preliminaries}\label{3}

We now recall the geometric and topological results required for the proofs of Theorem \hyperref[main1]{A} and Corollary \hyperref[main1]{B}. The principal tools used in proving these results are the Connectedness Lemma in \cite{Wilk} and several recent developments in \cite{KKSS}. Detailed proofs of the results presented below may be found in the cited references and are omitted here for brevity.

The first result we recall is a weaker version of the $\frac{3}{4}$-Maximal Symmetry Rank result of \cite{FR} and \cite{Gh}. For an outline of a proof of this weaker formulation, we refer the reader to \cite{AS}. We use this result later to anchor the induction in the proof of Theorem \hyperref[main1]{A}. 

\begin{theorem}\cite{FR, Gh} \label{Gh+Zp} 
Let $M^n$, $n\geq 13$ be a closed Riemannian manifold of positive sectional curvature. Suppose that $M$ admits an isometric and effective $\Z_p^r$-action with a fixed point $x\in M$, $p>2$. If
$$
r\,\geq \begin{cases}
   \lfloor \frac{3n}{8}\rfloor +2 \,\, \textrm{if }\,\, n\equiv 2, 4 \mod{8}\\
   \lfloor \frac{3n}{8}\rfloor +1 \,\, \textrm{, }  
\end{cases}
$$
then $M$ is homotopy equivalent to $S^n$, $\RP^n$, $\CP^{n/2}$, or a lens space.
\end{theorem}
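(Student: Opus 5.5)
This result is cited from \cite{FR, Gh}, and the paper explicitly defers to \cite{AS} for an outline of this weaker formulation, so I would reconstruct that outline rather than build a new argument. The strategy is induction on the dimension $n$, driven by the Connectedness Lemma of \cite{Wilk} and the isotropy representation at the fixed point $x$.

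\textbf{Setting up the code.} At $x$, the isotropy representation gives the injective homomorphism $\phi_x:\Z_p^r\to\Z_p^m$ with $m=\lfloor n/2\rfloor$. For each nontrivial $\tau$, the fixed point component $N$ of $M^\tau$ through $x$ has $\codim N = 2|\phi_x(\tau)|$.

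\textbf{Finding a low-codimension fixed point component.} A code of dimension $r\ge\lfloor 3n/8\rfloor+1$ in $\Z_p^{\lfloor n/2\rfloor}$ is large: $r/m$ is roughly $3/4$. I would use the Griesmer or Plotkin bound, or simply a subcode argument, to find one of the following:
\begin{enumerate}
\item a $\tau$ whose fixed point component $N$ has codimension at most roughly $n/4$; or
\item two commuting elements $\sigma,\tau$ whose fixed point components $N_1,N_2$ intersect transversally with small codimensions.
\end{enumerate}
For instance, the Griesmer bound $m\ge\sum_{i<r}\lceil d/p^i\rceil$ forces the minimum distance $d$ to satisfy $d\le m-r+1$, which is about $n/8$, so some $N$ has codimension about $n/4$. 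Wilking's Connectedness Lemma then makes $N\hookrightarrow M$ $(n-2\,\codim N+1)$-connected. This is at least roughly $n/2$-connected, so $N$ is large relative to $M$.

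\textbf{Induction.} The quotient $\Z_p^r/\langle\tau\rangle$ acts on $N$ with a fixed point, with rank dropping by at most one after accounting for ineffective kernel. Because $\codim N\approx n/4$, the rank-versus-dimension inequality persists, so the induction hypothesis gives that $N$ is a homotopy sphere, $\RP$, $\CP$, or a lens space. The high connectivity of $N\hookrightarrow M$ together with periodicity (Wilking's periodicity lemma for transversal pairs, or the $\CP$/sphere recognition used in \cite{KKSS}) then transfers the cohomology type, and fundamental group considerations give a homotopy equivalence.

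\textbf{Base case and main obstacle.} The base case near $n\ge 13$ requires small-dimension analysis, handled via \cite{FR}. I expect the main obstacle to be the case bookkeeping for $n\equiv 2,4 \pmod 8$, where the floor $\lfloor 3n/8\rfloor$ loses one. There I would check directly that the extra $+2$ restores the inequality for the induced action on $N$.
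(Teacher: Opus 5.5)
The paper gives no proof of this statement. It is quoted from \cite{FR, Gh}, with \cite{AS} cited for an outline, so your sketch has to stand on its own.

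\textbf{What is right.} Your Griesmer estimate is really the Singleton bound $d\le m-r+1$. In every residue class mod $8$ this gives a minimal-weight $\tau$ with $k=\codim N\le\frac{n+3}{4}$. Minimality of the weight makes the kernel on $N$ exactly $\langle\tau\rangle$. The Connectedness Lemma is the right tool for the transfer from $N$ to $M$.

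\textbf{The gap: the induction step fails for $k=2$.} Write $T(n)$ for the threshold in the statement. Passing to $N$ costs a full unit of rank, while $T$ drops only by about $3k/8$. So what the induction needs is a \emph{lower} bound on $k$; the bound $k\lesssim n/4$ does not help. For $k\ge 4$ one checks $T(n)-1\ge T(n-k)$ in all residue classes, but for $k=2$ it fails:
\begin{itemize}
\item if $n=8a+5$, then $T(n)-1=3a+1<3a+2=T(n-2)$;
\item if $n=8a+6$, then $T(n)-1=3a+2<3a+3=T(n-2)$.
\end{itemize}
The minimal weight may well be $1$ (e.g.\ rotations of $S^n$ in coordinate planes), so this case needs a separate argument. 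The natural one is the Codimension Two Lemma:
\begin{itemize}
\item for odd $n$, part (1) \cite{FRW} applies directly;
\item for even $n$, use part (3), taking the second $\Z_p$ from a complement of $\langle\tau\rangle$. By Singleton its minimal weight is $\le m-r+2$, so its fixed component has codimension $\le\frac{n+11}{4}\le\frac{n-1}{2}$. The last inequality uses $n\ge 13$.
\end{itemize}
This is the analogue of Case~2 in the paper's proof of Theorem~A.

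\textbf{The role of the $+2$.} You have misplaced it for $n\equiv 2,4 \pmod 8$: it is needed for the transfer, not for the induction inequality. Without it, Singleton only gives $k\le\frac{n+6}{4}$, resp.\ $\frac{n+4}{4}$. In the sphere case, $(n-2k+1)$-connectivity plus Poincar\'e duality kills $H_i(M)$ for all $0<i<n$ only when $4k\le n+3$. The precise replacement for your appeal to periodicity is Lemma~\ref{combo} with $N_1=N_2=N$, whose hypotheses then reduce to $4k\le n+3$; this also covers the $\CP$ case.

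\textbf{The base case.} The induction started at $n=13$ descends to $\dim N=n-k<13$ (e.g.\ $n=13$, $k=4$ gives $N^9$), where the statement is unavailable. Deferring this to \cite{FR} just cites the result being proved. These cases, in which the induced rank already equals $\lfloor \dim N/2\rfloor$, must be handled directly.
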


By a theorem of Kobayashi \cite{Kob}, fixed-point set components of isometric actions are totally geodesic submanifolds. In positive curvature, the existence of a totally geodesic submanifold of low codimension imposes strong topological restrictions on the ambient manifold. The Codimension Two Lemma of \cite{KKSS} and the result below it from \cite{AS} are what we will use to exploit this phenomenon in the proof of Theorem \hyperref[main1]{A}. 

\begin{Codim2}\label{codim2}
Let $M^{n+2}$ be a closed, positively curved Riemannian manifold. Suppose $N^n$ is a closed, totally geodesic submanifold of $M$.
Assume one of the following holds:
	\begin{enumerate}[font=\normalfont]
	\item \cite{FRW} $M$ is odd-dimensional, of dimension $\geq 5$; 
	\item \cite{KKSS} The inclusion $N^n \subseteq M^{n+2}$ is $n$-connected; or
	\item  $M^{n+2}$ admits a $\Z_p^2$ action such that $N^n$ is a fixed-point set component of a $\Z_p$ subgroup and such that there is a second $\Z_p$ subgroup whose fixed-point set component $N'$ has codimension  at most $\frac{n+1}{2}$;
	\end{enumerate}
then $N^n$ is homotopy equivalent to $S^n$, $\RP^n$,  $\CP^{\frac n 2}$, or a lens space.
In particular, $M^{n+2}$ is homotopy equivalent to $S^{n+2}$, $\RP^{n+2}$,  $\CP^{\frac{n+2}{2}}$, or a lens space.
\end{Codim2}

\begin{lemma}\cite{AS}\label{combo} Let $M^n$ be a closed,  positively curved Riemannian $n$-manifold, $n\geq 5$. Suppose that $N_1^{n-k_1}$ and $N_2^{n-k_2}$ are two closed, totally geodesic submanifolds of $M$ with $ k_1\leq k_2$, $4k_1\leq n+3$, and $k_1+2k_2\leq n+1$. Then 
\begin{enumerate}
\item If $\widetilde{N_2}$ is a cohomology sphere and $N_2$ has cyclic fundamental group, then  
 $M$ is homotopy equivalent to $S^n$, $\RP^n$, or a lens space; and
 \item If  $\widetilde{N_2}$ is cohomology complex projective space,  $2|k_1$ and $\pi_1(N_2)$ is trivial, then $M$ is homotopy equivalent to $\CP^{n/2}$.
\end{enumerate}

  \end{lemma}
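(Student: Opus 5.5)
The plan is to combine Wilking's Connectedness Lemma with his Periodicity Lemma, in the same way as in \cite{Wilk} and \cite{KKSS}. We first record the connectivity of the relevant inclusions, then transport the low-degree cohomology of $N_2$ up to $M$, and finally use $k_1$-periodicity coming from $N_1$ to fill in the remaining degrees.

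\textbf{Step 1 (connectivity).} By the Connectedness Lemma of \cite{Wilk}, $N_1^{n-k_1}\hookrightarrow M$ is $(n-2k_1+1)$-connected and $N_2^{n-k_2}\hookrightarrow M$ is $(n-2k_2+1)$-connected.
\begin{itemize}
\item From $k_1\le k_2$ and $k_1+2k_2\le n+1$ we get $k_1+k_2\le n$. Frankel's theorem then gives $N_1\cap N_2\neq\emptyset$.
\item From $k_1+2k_2\le n+1$ we get $n-2k_2+1\ge k_1$, which is at least $2$ once $k_1\ge 2$. The codimension-one case must be handled separately, for instance using that a codimension-one totally geodesic submanifold forces $M$ to be a twisted double of a disk.
\item Hence $\pi_1(M)\cong\pi_1(N_2)$ and the universal covers correspond. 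Moreover $H^i(\widetilde M)\cong H^i(\widetilde N_2)$ for $i<n-2k_2+1$, so at least for all $i<k_1$, and injectively in degree $n-2k_2+1\ge k_1$.
\end{itemize}

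\textbf{Step 2 (periodicity).} Lift to universal covers. Wilking's Periodicity Lemma, applied to the $(n-2k_1+1)$-connected inclusion $\widetilde N_1\subseteq\widetilde M$ of codimension $k_1$, yields $e\in H^{k_1}(\widetilde M;\Z)$ with the following property: cup product with $e$, $H^i(\widetilde M)\to H^{i+k_1}(\widetilde M)$, is surjective for $0\le i<n-2k_1+1-k_1$ and injective for $0<i\le n-2k_1+1-k_1$.

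The hypothesis $4k_1\le n+3$ says $n-2k_1+1\ge 2k_1-2$. This is exactly the overlap needed, via Poincaré duality on $\widetilde M$, for the periodicity to propagate through all degrees $0,\dots,n$.

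\textbf{Step 3 (identification).}
\begin{itemize}
\item In case (1), Step 1 gives $H^i(\widetilde M)=0$ for $0<i\le k_1$, with the top degree handled by the injectivity statement. In particular $e=0$, and periodicity forces $H^i(\widetilde M)=0$ for $0<i<n$. So $\widetilde M$ is a simply connected homology sphere, hence a homotopy sphere. Since $\pi_1(M)=\pi_1(N_2)$ is cyclic and acts freely, $M$ is homotopy equivalent to $S^n$, $\RP^n$ or a lens space; in even dimensions Synge's theorem restricts $\pi_1$ to $0$ or $\Z_2$.
\item In case (2), $\pi_1(M)=0$, and $H^*(M)$ agrees with $\Z[x]/(x^{\cdot})$, $|x|=2$, up to degree $k_1$. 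Since $2\mid k_1$, $e=\pm x^{k_1/2}$, and periodicity extends the truncated polynomial structure to the whole ring. Hence $H^*(M;\Z)\cong H^*(\CP^{n/2};\Z)$. A generator $x$ classifies a map $M\to\CP^\infty$; after cellular approximation into $\CP^{n/2}$ this map is a cohomology isomorphism between simply connected spaces, hence a homotopy equivalence.
\end{itemize}

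\textbf{Main obstacle.} The delicate step is the bookkeeping in Step 2. One must check that the surjectivity/injectivity ranges, together with Poincaré duality, actually cover every degree, and that the degree-$k_1$ class $e$ is identified with the corresponding class of $N_2$ using only the $k_1$-connectivity (rather than more) from $k_1+2k_2\le n+1$. I would first try the cleanest route, working with coefficients in a field of characteristic $p$ and with $\Z$, and invoking the version of the Periodicity Lemma stated in \cite{KKSS}. I expect this verification to be where the two numerical hypotheses are used sharply.
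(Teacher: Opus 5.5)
Your strategy is the standard one: the Connectedness Lemma on $N_2$ gives $\pi_1$ and low-degree cohomology, the Periodicity Lemma on $N_1$ gives periodicity, and duality handles the top. The paper quotes this lemma from \cite{AS} without proof, so the comparison here is with that standard argument. There is, however, a genuine gap: you have misstated the Periodicity Lemma, and Case (2) depends on the misstatement.

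For an $(n-2k_1+1)$-connected inclusion of codimension $k_1$ (so $l=k_1-1$ in Wilking's notation), the map $\cup e\colon H^i(\widetilde M)\to H^{i+k_1}(\widetilde M)$ is surjective only for $k_1-1\le i<n-2k_1+1$. It is injective for $k_1-1<i\le n-2k_1+1$. Nothing is asserted at $i=0$, so you cannot conclude that $e$ generates $H^{k_1}$.

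Hence your sentence ``since $2\mid k_1$, $e=\pm x^{k_1/2}$'' is unjustified. A priori you only know $e=\lambda x^{k_1/2}$ for some $\lambda\in\Z$. Your Step 2 claim is also not right as stated. Over a field, duality exchanges surjectivity at degree $i$ with injectivity at degree $n-k_1-i$, which stays inside the same window. So duality cannot by itself extend periodicity to low degrees; those must come from $N_2$. This is exactly the ``bookkeeping'' you flagged but did not carry out.

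\textbf{Case (1).} This survives once the correct ranges are used. Since $e=0$, injectivity kills $H^i$ for $k_1\le i\le n-2k_1+1$, and surjectivity kills $H^i$ for $2k_1-1\le i\le n-k_1$. Together with $0<i\le k_1$ from $N_2$, these ranges cover $0<i\le n-k_1$ precisely because $4k_1\le n+3$. Duality is needed only for $n-k_1<i<n$.

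\textbf{Case (2).} You must first use that $n$ is even. This holds in the application, where $k_2$ is even; for odd $n$ one can have $n-2k_2+1=k_1$, and the argument breaks down.

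\begin{enumerate}
\item With $n$ even, $c=n-2k_2+1$ is odd, so $c\ge k_1+1$. Hence $H^{k_1}(M)=\Z x^{k_1/2}$.
\item Parity also upgrades $4k_1\le n+3$ to $4k_1\le n+2$, so $3k_1\le n$. Therefore $\cup e\colon H^{k_1}\to H^{2k_1}$ is bijective, and $H^{2k_1}=\Z\,\lambda x^{k_1}$.
\item The group $H^{2k_1}=\Z\,\lambda x^{k_1}$ contains the non-torsion class $x^{k_1}$, which forces $\lambda=\pm1$.
\item Now factor $\cup e=\pm(\cup x)^{k_1/2}$. Injectivity of $\cup e$ on $[k_1,n-2k_1+1]$ makes every map in the chain $H^{k_1}\xrightarrow{x}H^{k_1+2}\xrightarrow{x}\cdots\xrightarrow{x}H^{2k_1}$ injective. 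Their composite is an isomorphism, so each map is an isomorphism.
\item By the same reasoning, each odd $H^j$ with $k_1<j<2k_1$ injects into $H^{2k_1-1}=e\cdot H^{k_1-1}=0$.
\end{enumerate}

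This fills in the degrees between $c$ and $2k_1$, which your sketch skips. Periodicity and duality then finish the argument as you describe.
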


We next recall the Borel formula, which expresses the codimension of a fixed-point component in terms of the codimensions of the fixed-point set components of its corank-one subgroups.

\begin{Borel} \label{borel} \cite{B}
Let $p>2$ be a prime and let $\mathbb{Z}_p^r$ act smoothly on $M$, a Poincaré duality space, with fixed-point set component $F$. Then
$$
\codim (F \subseteq M) \;=\; \sum \codim(F \subseteq F'),
$$
where the sum runs over the fixed-point set components $F'$ of corank one subgroups 
$\mathbb{Z}_p^{r-1} \subseteq \mathbb{Z}_p^{r}$ for which $\dim(F')>\dim(F)$.

These subgroups are precisely the kernels of the irreducible subrepresentations of
the isotropy representation of $\mathbb{Z}_p^r$ on the normal space to $F$. 
In particular, the number of pairwise inequivalent irreducible subrepresentations 
is at least $r$ whenever the action is effective. Moreover, equality holds only if 
the isotropy representation is equivalent to a block diagonal representation of the form
\[
(\eta_1,\dots,\eta_r) \;\longmapsto\;
\operatorname{diag}\bigl(R(\eta_1)^{m_1},\dots,R(\eta_r)^{m_r}\bigr),
\]
where $m_i>0$ denotes the multiplicity of the $i$-th irreducible summand, and
$R(\eta_i)$ is the $2\times 2$ real rotation matrix corresponding to
multiplication by a primitive $p$-th root of unity $\eta_i$. 
Here $R(\eta_i)^{m_i}$ denotes the block diagonal matrix consisting of $m_i$ copies of $R(\eta_i)$.
\end{Borel}

We finally record the following analogue of Observation 3.6 in \cite{KKSS}. Its proof is identical to that of \cite{KKSS}, since it depends only on the Connectedness Lemma and the \hyperref[borel]{Borel formula}, and not on the value of $p$.

\begin{observe}\cite{KKSS}\label{borelobserve}
    Let $F_j^{m_j}$ be the fixed-point set component of a $\Z_p^{r-j}$-action by isometries on a closed, positively curved manifold, and suppose that $r-j \geq 2.$ If the isotropy representation has exactly $r-j$ irreducible subrepresentations, and if the fixed-point set component $F_{j+1}$ containing $F_j$ of some $\Z_p^{r-j-1}$ has the property that the codimension $k_j$ of the inclusion $F_j \subseteq F_{j+1}$ is minimal, then this inclusion is $\dim(F_j)$-connected.  
\end{observe}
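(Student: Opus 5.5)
The plan is to deduce the connectedness of $F_j\subseteq F_{j+1}$ from the \emph{intersection} form of Wilking's Connectedness Lemma \cite{Wilk}, applied inside a suitable totally geodesic submanifold. That form says: if $N_1$ and $N_2$ are closed, totally geodesic submanifolds of a closed, positively curved $W^{\ell}$ of codimensions $k_1\le k_2$, meeting transversally, then $N_1\cap N_2\hookrightarrow N_2$ is $(\ell-k_1-k_2)$-connected. The \hyperref[borel]{Borel formula} supplies the splitting of the normal space that makes this lemma applicable.

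\textbf{Step 1: split the normal space.} Fix $x\in F_j$ and set $d=\dim F_j$. Since the isotropy representation of $\Z_p^{r-j}$ on $\nu_xF_j$ has exactly $r-j$ irreducible subrepresentations, the equality case of the \hyperref[borel]{Borel formula} gives a decomposition
\[
\nu_xF_j=V_1\oplus\cdots\oplus V_{r-j}.
\]
Here $V_i$ is the isotypic summand $R(\eta_i)^{m_i}$, so $\dim V_i=2m_i$, and the kernels $K_i=\ker\eta_i$ are exactly the corank-one subgroups with larger fixed-point components. Let $F^{(i)}$ be the component of $M^{K_i}$ through $x$. Then $T_xF^{(i)}=T_xF_j\oplus V_i$, so $\codim(F_j\subseteq F^{(i)})=2m_i$. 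After relabeling, $F_{j+1}=F^{(1)}$ and $k_j=2m_1\le 2m_i$ for all $i$, by the minimality hypothesis.

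\textbf{Step 2: choose the ambient manifold and check the hypotheses.} Since $r-j\ge 2$, a second index $i=2$ exists. Let $W$ be the component through $x$ of the fixed-point set of $K_1\cap K_2$. By Kobayashi \cite{Kob}, $W$ is a closed, totally geodesic, hence positively curved, submanifold. Its tangent space is
\[
T_xW=T_xF_j\oplus V_1\oplus V_2,
\]
so $\dim W=d+2m_1+2m_2$. Both $F^{(1)}$ and $F^{(2)}$ lie in $W$ as totally geodesic submanifolds. Their codimensions in $W$ are $2m_2$ and $2m_1$ respectively, so the smaller codimension $2m_1$ belongs to $N_1:=F^{(2)}$ and the larger one $2m_2$ to $N_2:=F^{(1)}=F_{j+1}$; this is the ordering the lemma requires. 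Their tangent spaces at $x$ are $T_xF_j\oplus V_2$ and $T_xF_j\oplus V_1$. These two spaces span $T_xW$ and intersect in $T_xF_j$, so the intersection is transversal and the component of $F^{(1)}\cap F^{(2)}$ through $x$ is $F_j$.

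\textbf{Step 3: apply the lemma.} The intersection form of the Connectedness Lemma now gives that $F_j\hookrightarrow F_{j+1}$ is
\[
\bigl(\dim W-2m_1-2m_2\bigr)=d=\dim(F_j)
\]
connected, which is the claim.

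\textbf{Main obstacle.} The delicate point is the bookkeeping in Steps 1 and 2. One must identify the ambient $W$ correctly and check that the codimension roles are the right way round, so that the minimality of $k_j$ is exactly what licenses applying the lemma with $N_2=F_{j+1}$. One must also confirm that nothing in the argument depends on $p$. Both the decomposition coming from the Borel formula and the Connectedness Lemma are insensitive to $p$ once $p>2$, and this is why the proof of Observation 3.6 in \cite{KKSS} should transfer verbatim.
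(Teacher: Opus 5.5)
Your proposal is correct and is essentially the KKSS argument the paper defers to: the Borel equality case splits $\nu_xF_j$ into isotypic summands whose kernels are the coordinate hyperplanes $K_i$, and Wilking's intersection form of the Connectedness Lemma is applied inside the fixed component of $K_1\cap K_2$, where minimality of $k_j$ makes $F_{j+1}$ the submanifold of larger codimension, so that $F_j\hookrightarrow F_{j+1}$ is $\dim(F_j)$-connected. One small point should be added: Wilking's lemma needs no transversality but concerns the whole intersection $F^{(1)}\cap F^{(2)}$, and for $\dim F_j\ge 1$ its conclusion forces that intersection to be connected, hence equal to $F_j$ (for $\dim F_j=0$ the claim is trivial).
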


\section{The Proof of Theorem A}\label{4}

Before sketching the proof of the main theorem, we begin by proving a lemma giving the existence of two fixed-point set components of low codimension.

\begin{lemma}\label{lemmaA}
Assume $\Z_p^r$ acts effectively and isometrically on a closed, positively
curved manifold $M^n$ with a non-empty fixed-point set, where 
$p \in \{3,5,7,11,13,17,19\}$. 
Suppose
\begin{multline*}
r>
\frac12\Bigl(1-H_p(J_p(\tfrac14))\Bigr)n
+\frac32\log_p n
+\log_p\!\left(\frac{p-1}{2}+\frac1n\right)
-\frac12\log_p 2 \\
\qquad
+\frac12\log_p\!\left(\frac{8(1-J_p(\tfrac14))}{J_p(\tfrac14)}\right)
+\log_p(p-1).
\end{multline*}
Then the following hold:
\begin{enumerate}
\item There exists $\tau_1\in\Z_p^r$ such that $\codim(M^{\tau_1}_x)\le \frac{n+3}{4}$;
\item There exists $\tau_2\in\Z_p^r$ such that $\codim(M^{\tau_2}_x)\le \frac{n+1}{3}$, where $\tau_2\notin\langle\tau_1\rangle$.
\end{enumerate}
\end{lemma}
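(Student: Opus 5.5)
We argue by contradiction via the coding-theoretic dictionary of Section \ref{2}. Let $x$ be a fixed point and let $\phi_x:\Z_p^r\to\Z_p^m$, $m=\lfloor n/2\rfloor$, be the injective homomorphism from the isotropy representation. Its image $C$ is a linear code of dimension $r$, and $\codim(M^\tau_x)=2|\phi_x(\tau)|$. Since $C$ is linear, its minimum distance is the minimum weight of a nonzero codeword.

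For Part (1), suppose every nontrivial $\tau$ satisfies $\codim(M^\tau_x)>\frac{n+3}{4}$. Then every nonzero codeword has weight $>\frac{n+3}{8}$, so the minimum distance is at least roughly $\frac{n}{8}\ge \frac14 m$. More precisely, we need $d_C\ge \delta m$ with $\delta=\frac14$; we check that $\lfloor\frac{n+3}{8}\rfloor+1\ge \frac{m}{4}$, which holds since $m\le n/2$. We then apply Theorem \ref{newEB} with $\delta=\tfrac14$. The monotonicity in Remark \ref{remarkJohnson&Entropy} allows passing from the actual distance to $\delta m$, and there is a minor integrality issue with $b=\delta m$ that may require taking $b=\lceil m/4\rceil$. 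This gives
\[
r\le (1-H_p(J_p(\tfrac14)))m+\tfrac12\log_p m+\tfrac12\log_p\!\left(\tfrac{8(1-J_p(1/4))}{J_p(1/4)}\right)+\log_p(p-1)+\log_p\kappa_p(\tfrac14,m).
\]

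Next we bound each term in terms of $n$. We use $m\le n/2$ in the linear term, $\tfrac12\log_p m\le \tfrac12\log_p n-\tfrac12\log_p 2$, and $\kappa_p\le (p-1)m+1\le n\bigl(\frac{p-1}{2}+\frac1n\bigr)$. The last estimate gives $\log_p\kappa_p\le \log_p n+\log_p(\frac{p-1}{2}+\frac1n)$. Summing yields exactly the hypothesis threshold $G(n,p)$, which contradicts the assumption on $r$ and proves Part (1). The main care here is the matching of constants, since they were evidently designed so this reduction is exact.

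For Part (2), take $\tau_1$ from Part (1) and pass to a complement: choose a subgroup $\Z_p^{r-1}\subseteq\Z_p^r$ not containing $\tau_1$. Its image under $\phi_x$ is a linear code of dimension $r-1$. If every nontrivial element of this subgroup had fixed-point codimension $>\frac{n+1}{3}$, the subcode would have minimum distance $>\frac{n+1}{6}\ge \frac{m}{3}\ge\frac14 m$. We then apply Theorem \ref{newEB} again, either with $\delta=\frac14$ or more strongly with $\delta=\frac13$, using the monotonicity of $1-H_p\circ J_p$. This bounds $r-1$ by a quantity that is smaller than $G(n,p)-1$, because the $\delta=1/3$ rate is strictly smaller and the slack absorbs the $-1$. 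That gives a contradiction.

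Any element $\tau_2$ found this way lies outside $\langle\tau_1\rangle$, since $\langle\tau_1\rangle$ meets the complement trivially. The expected obstacle is this last comparison. If the $\delta=\tfrac14$ bound is used for Part (2), we need $r-1>G$-type bound, which is not immediate. So I would first try $\delta=\frac13$ and verify numerically, for the listed primes and $n\ge13$, that the gap between the rates $\frac12(1-H_p(J_p(1/4)))$ and $\frac12(1-H_p(J_p(1/3)))$ times $n$ exceeds $1$ plus the change in the $\log$ constants. Failing that, I would instead apply Part (1)'s argument to the subgroup, with only the integer slack of the strict inequality.
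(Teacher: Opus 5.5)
Your argument is the paper's own. You apply Theorem~\ref{newEB} with $\kappa_p(\delta,m)\le (p-1)m+1$ at $\delta=\tfrac14$ to the isotropy code, and then at $\delta=\tfrac13$ to the image of a complement of $\langle\tau_1\rangle$. Part (1) is complete as written. With $m\le n/2$ you have $\tfrac12\log_p m\le \tfrac12\log_p n-\tfrac12\log_p 2$ and $\log_p((p-1)m+1)\le \log_p n+\log_p(\tfrac{p-1}{2}+\tfrac1n)$, so the bound is exactly $G(n,p)$.

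The step that would fail as you planned it is the comparison closing Part (2), which the paper delegates to \cite{AS}. Write $G_{1/3}(n,p)$ for $G(n,p)$ with $J_p(\tfrac14)$ replaced by $J_p(\tfrac13)$. You need $G(n,p)-G_{1/3}(n,p)\ge 1$, where the $\log_p n$ terms cancel and
\[
G(n,p)-G_{1/3}(n,p)=\frac n2\Bigl(H_p\bigl(J_p(\tfrac13)\bigr)-H_p\bigl(J_p(\tfrac14)\bigr)\Bigr)+\frac12\log_p\frac{\bigl(1-J_p(\tfrac14)\bigr)\,J_p(\tfrac13)}{J_p(\tfrac14)\,\bigl(1-J_p(\tfrac13)\bigr)}.
\]
This is below $1$ on an initial stretch of $n\ge 13$ for every listed prime. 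It is about $0.999$ for $p=3$, $n=14$, and about $0.91$ for $p=5$, $n=16$. It stays below $1$ for all $13\le n\le 24$ when $p=19$. So the numerical check you propose does not succeed for all $n\ge 13$.

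Your fallback cannot rescue it either. The $\delta=\tfrac14$ bound on the complement only gives $r-1\le G(n,p)$, which is compatible with $r>G(n,p)$, for instance when $r-1=\lfloor G(n,p)\rfloor$.

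The missing observation is that $\phi_x$ is injective, so $r\le m=\lfloor n/2\rfloor$. On the whole range where the gap is below $1$ one has $G(n,p)>n/2$; for example $G(14,3)\approx 9.6>7$ and $G(24,19)\approx 12.7>12$. There the hypothesis of the lemma can never hold. Beyond that range the gap, being increasing in $n$, is at least $1$, so your $\delta=\tfrac13$ argument closes. This observation also covers $n<13$, which matters because the lemma itself carries no dimension assumption.
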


\begin{proof}
Let $\rho_x:\Z_p^r\rightarrow\Z_p^m$, where $m=\floor{n/2}$, be the isotropy representation at $x$, and set $\mathcal{C}=\rho_x(\Z_p^r)\subseteq\Z_p^m$. Since the action is effective, $\rho_x$ is injective. The proof is identical to that of the corresponding lemma in \cite{AS}, with the only modification being the substitution of the estimate in Theorem \ref{newEB}. In particular, using $\kappa_p(\delta,m)\leq (p-1)m+1$, Theorem \ref{newEB} gives
$$
r\leq (1-H_p(J_p(\delta)))m+\frac12\log_p m+\frac12\log_p\left(\frac{8(1-J_p(\delta))}{J_p(\delta)}\right)+\log_p(p-1)+\log_p((p-1)m+1).
$$
Setting $\delta=\frac14$, the hypothesis of the lemma contradicts this inequality unless there exists a non-trivial element $\tau_1\in\Z_p^r$ such that
$$
\codim(M_x^{\tau_1})\leq\frac{n+3}{4}.
$$
Likewise, restricting $\rho_x$ to a subgroup $\Z_p^{r-1}\subseteq\Z_p^r$ complementary to $\langle\tau_1\rangle$ and setting $\delta=\frac13$ gives, as in \cite{AS}, an element $\tau_2\notin\langle\tau_1\rangle$ such that
$$
\codim(M_x^{\tau_2})\leq\frac{n+1}{3}.
$$
The comparison between the bounds at $\delta=\frac14$ and $\delta=\frac13$ is the same as in \cite{AS}, since the additional term $\log_p((p-1)m+1)$ is independent of $\delta$.
\end{proof}

\begin{remark}
In the geometric applications, we replace $\kappa_p(\delta,m)$ by the simpler estimate $(p-1)m+1$. For $3\leq p\leq19$, $\delta\in\{\frac14,\frac13\}$, and $m=\floor{n/2}$ with $n\geq13$, the resulting loss
$$
\log_p\left(\frac{(p-1)m+1}{\kappa_p(\delta,m)}\right)
$$
is at most $\log_3(15)=2.4649735207\ldots$, attained when $p=3$, $\delta=\frac14$, and $m=7$. We make this relaxation because it substantially simplifies the subsequent inductive arguments while sacrificing only a small additive constant.
\end{remark}

We now recall the statement of Theorem \hyperref[main1]{A} for the convenience of the reader.

\begin{main1}
Assume $\Z_p^r$ acts effectively and isometrically on a closed, positively
curved manifold $M^n$ with a fixed point $x\in M$, where $p \in \{3,5,7,11,13,17,19\}$ and $n \ge 13$. 
Suppose
\begin{multline*}
r>
\frac12\Bigl(1-H_p(J_p(\tfrac14))\Bigr)n
+\frac32\log_p n
+\log_p\!\left(\frac{p-1}{2}+\frac1n\right)
-\frac12\log_p 2 \\
\qquad
+\frac12\log_p\!\left(\frac{8(1-J_p(\tfrac14))}{J_p(\tfrac14)}\right)
+\log_p(p-1)
=: G(n,p).
\end{multline*}
where $H_p$ and $J_p$ are the $p$-ary entropy and Johnson functions defined in Section \ref{2}. Then $M$ is homotopy equivalent to $S^n$, $\RP^n$, $\CP^{n/2}$, or a lens space.
\end{main1}

\begin{proof}
The proof follows that of Theorem B in \cite{AS} and is by total induction on dimension. The result in Theorem \ref{Gh+Zp} provides the base case for the induction.

By Part (1) of Lemma \ref{lemmaA}, there exists a non-trivial $\tau_1\in\Z_p^r$ and a component $x\in N_1^{n-k_1}\subseteq M^{\tau_1}$ such that $k_1\leq\frac{n+3}{4}$ and $N_1$ is of maximal dimension. By maximality, the kernel of the induced $\Z_p^r$-action on $N_1$ is $\Z_p$, and hence $N_1$ admits an effective $\Z_p^{r-1}$-action fixing $x$.

By Part (2) of Lemma \ref{lemmaA}, there exists a non-trivial element $\tau_2\notin\langle\tau_1\rangle$ and a component $x\in N_2^{n-k_2}\subseteq M^{\tau_2}$ such that $k_2\leq\frac{n+1}{3}$ and $N_2$ is of maximal dimension. Again, $N_2$ admits an effective $\Z_p^{r-1}$-action fixing $x$. Without loss of generality, assume $k_1\leq k_2$. Since fixed-point components have even codimension, there are two cases.

\noindent\textbf{Case 1: $k_2\geq4$.} Using $r>G(n,p)$ and $k_2\geq4$, a computation gives
$$
r-1>G(n-k_2,p).
$$
The induction hypothesis therefore implies that $N_2$ is homotopy equivalent to $S^{n-k_2}$, $\RP^{n-k_2}$, $\CP^{(n-k_2)/2}$, or a lens space. The remainder of the argument is identical to Case 1 in the proof of Theorem B in \cite{AS}, and involves invoking Lemma \ref{combo} to obtain the corresponding homotopy classification of $M$.

\noindent\textbf{Case 2: $k_2=2$.} Since $k_1\leq k_2$ and both codimensions are positive and even, we have $k_1=k_2=2$. The argument is then identical to Case 2 in the proof of Theorem B in \cite{AS}. Using the \hyperref[borel]{Borel Formula}, Observation \ref{borelobserve}, and the \hyperref[codim2]{Codimension Two Lemma}, it follows that $M$ is homotopy equivalent to $S^n$, $\RP^n$, $\CP^{n/2}$, or a lens space.

This completes the induction and completes the proof.
\end{proof}

\section{Declarations}

\subsection{Funding}

The author was partially supported by Catherine Searle's NSF grants DMS-2204324 and DMS-2506633, and her Simons’ Foundation Travel Grant for Mathematicians \#SFI-MPS-TSM-00012804 (2025–2030).

\subsection{Competing Interests}

The author has no competing interests to declare that are relevant to the content of this article.

\end{document}